\newtheorem{theorem}{Theorem}
\newtheorem{lemma}[theorem]{Lemma}
\newtheorem{corollary}[theorem]{Corollary}
\newtheorem{proposition}[theorem]{Proposition}
\newtheorem{example}[theorem]{Example}
\title{Implicitization Of A Plane Curve Germ}
\begin{document}

\maketitle
\authors{Joao Cabral - {\small NOVA School of Science and Technology, Portugal; jpbc@fct.unl.pt} 
\and
Ana Casimiro  - {\small NOVA School of Science and Technology, Portugal; amc@fct.unl.pt} 
}


\begin{abstract}
Let $Y=\{f(x,y)=0\}$ be the germ of an irreducible plane curve. We present an algorithm to obtain polynomials, whose valuations coincide with the semigroup generators of $Y$. These polynomials are obtained sequentially, adding terms to the previous one in an appropriate way. To construct this algorithm, we perform truncations of the parametrization of $Y$ induced by the Puiseux Theorem. Then, an implicitization  theorem  of Tropical Geometry (Theorem $1.1$ of \cite{CM}) for plane curves is applied to the truncations. The identification between the local ring and the semigroup of $Y$ plays a key role in the construction of the algorithm, allowing us to carry out a formal elimination process, which we prove to be finite. The complexity of this elimination process equals the complexity of a integer linear programming problem. This algorithm also allows us to obtain an approximation of the series $f$, with the same multiplicity and characteristic exponents. We present a pseudocode of the algorithm and examples, where we compare computation times between an implementation of our algorithm and elimination through Gr\"{o}bner bases.
\end{abstract}

\section*{Acknowledgements}

\noindent
The authors were supported by national funds through the FCT - Fundação para a Ciência e a Tecnologia, I.P., under the scope of the projects UIDB/00297/2020 and UIDP/00297/2020 (Center for Mathematics and Applications).

\section{Introduction}

\noindent
The main goal of implicitization is to obtain a representation of an algebraic variety as the zero set of Laurent polynomials from a parametrization representation given also by Laurent polynomials. In \cite{STY},  B. Sturmfels characterizes implicitization as a fundamental operation in computational algebraic geometry and gives a general solution to the problem of implicitization in the hypersurface case. In this case, the implicitization procedure is usually done by using a theorem that characterizes a set, which contains the support of a Laurent polynomial defining the zero set, then it applies numerical linear algebra to compute the coefficients of the Laurent polynomial.

\noindent
In this paper, we undertake implicitization in a Local Analytic Geometry context and our approach will be different from the previous one. Puiseux Theorem produces an algorithm to obtain a parametrization of the germ of an irreducible plane curve (see Section 8.3 of \cite{BK}), such parametrization contains topological information and one can associate a semigroup to the irreducible plane curve using such information. We present a procedure that reduces implicitization to an integer linear programming problem using the irreducible plane curve semigroup. The following example illustrates the importance and difficulties of the implicitization problem. Let us consider the germ of a plane curve $Y$ with characteristic $(6;8,9)$. Then, by applying O. Zariski's techniques presented in \cite{ZAR}, one obtains that the semigroup of $Y$ is generated by $\{6,8,25\}$ and the curve admits a short form parametrization of the type
\begin{equation}\label{INTEX}
	x=t^6, y=t^8+ct^9+a_1 t^{10}+a_2 t^{11}+a_4 t^{13}+a_{10} t^{19},
\end{equation}
where $c\in\mathbb C^{\ast}$ and $a_i\in\mathbb C$, $i\in\{1,2,4,10\}$. By Theorem 2.64 of \cite{GLS}, this short parametrization induces a unique equisingular deformation of the equation of $Y$. One way to obtain the equation associated to (\ref{INTEX}) is to use a  computer algebra system for polynomial computations, for example \em Singular \em (see \cite{SING}), and apply elimination theory using Gr\"{o}bner basis. The problem lies in the fact that this procedure has very high memory usage and can be hard to get results on a standard pc. 

\noindent
In Section \ref{SQO} we introduce the basics about the germ of an irreducible quasi-ordinary hypersurface $Y$, namely characteristics exponents, semigroup and, in the case of plane curves, the connection between valuation and semigroup. In Theorem \ref{NORMALF} we present a substitute to the conductor of the semigroup for the hypersurface case. We also present some results that will be needed later on. 

\noindent
From Section \ref{NPE} onward we restrict ourselves to the case of plane curves. Following the reasoning of truncating a branch, with $s$ characteristic exponents, made in the proof of Theorem 3.9 of \cite{ZAR}, we introduce in Section \ref{NPE} truncations $\iota_i$, $i=1,\ldots,s$, of a parametrization and the induced valuations $\vartheta_{\iota_i}$. Since these truncations are polynomial, we apply a theorem of Tropical Geometry, Theorem $1.1$ of \cite{CM}, to obtain a polygon $N_i$ that contains the support of a polynomial $f_i$ such that $\iota_i^{\ast}f_i=0$. We relate polygon $N_i$ to the Newton polygon (in terms of Puiseux expansions) of $Y_i=\{f_i=0\}$. 

\noindent
In Section \ref{MAIN} we present the main result, Theorem \ref{MAINTHEO}, which states the existence of polynomials $f_i$, $i\in\{0,\ldots,s\}$, such that $f_{i-1}$ is obtained from $f_i$, the support of $f_i$ is contained in $N_i$ and $\iota_i^{\ast}f_i=0$. Furthermore, for $i=1,\ldots,s$, Theorem \ref{MAINTHEO} gives a relation between $\vartheta_{\iota_j}(f_{i-1})$, $j=i,\ldots,s$, and the generators of the semigroup of $Y_j$. These polynomials are unique up to multiplication by a constant. Corollary \ref{APPROX} states that $\{f_s=0\}$ is a "good" approximation of a representative of $Y$. The polynomial $f_s$ depends only of our choice for the truncation $\iota_s$. We prove Theorem \ref{MAINTHEO} by induction in Subsections \ref{MAINA} and \ref{MAINB}. In Subsection \ref{MAINA}, we prove the existence of the $f_i$'s, by doing a formal elimination procedure that gives us formal versions of the $f_i$'s and from these we prove the existence of the polynomials $f_i$'s. Subsection \ref{MAINB} is dedicated to the proof of the valuation part of the main result. Lemma \ref{INDUI} gives us relations between the valuations $\vartheta_{\iota_i}$. One important fact to obtain these relations is the decomposition given in statement \em (B) \em of Lemma \ref{INDUI} and also present in the proof of Theorem 3.9 of \cite{ZAR}. The valuations $\vartheta_{\iota_i}$ are induced by polynomial parametrizations and therefore can be implemented on a computer.

\noindent
Based upon step 2 made in the proof of  statement \em (B) \em of Theorem \ref{MAINTHEO}, we give in Section \ref{CA} an algorithm to compute $f_i$ from $f_{i-1}$. Whereas the elimination procedure used in step 2 was formal, now using the appropriate characterization of the polygon's $N_i$ and the decomposition given in statement \em (B) \em of Lemma \ref{INDUI}, Algorithm \ref{COMPIMP} gives us a finite elimination procedure whose computational complexity depends only on a integer linear programming problem. Namely, finding the points with non negative integers entries on a compact section of a hyperplane, whose normal vector is given by the generators of the semigroup associated to $Y_i$. Algorithm \ref{COMPIMP} also depends on computing valuations, but these computations are done only using the $\vartheta_{\iota_i}$'s. To finalize we give two examples comparing computing times between an implementation of Algorithm \ref{COMPIMP} and elimination using \em Singular\em.

\noindent
The authors intend in the future to generalize the results of this paper to Quasi-Ordinary hypersurfaces.

\section{Semi-group of a Quasi-Ordinary Hypersurface}\label{SQO}

\noindent
Let $\mathbb N_0$ [$\mathbb N$] be the set of non negative integers [positive integers] and $\mathbb Q_{\geq 0}$ [$\mathbb Q_{> 0}$] the set of non negative rationals [positive rationals]. We endow $\mathbb R^n$ with the partial order
\begin{equation}\label{PARTORD}
	(a_1,\ldots,a_n)\leq (b_1,\ldots,b_n) \Leftrightarrow a_i\leq b_i,\,\forall i\in\{1,\ldots,n\}.
\end{equation}
If $(a_1,\ldots,a_n)\leq (b_1,\ldots,b_n)$ and $(a_1,\ldots,a_n)\neq (b_1,\ldots,b_n)$, we write $(a_1,\ldots,a_n)<(b_1,\ldots,b_n)$. Note that for $n=1$ we have the usual order in $\mathbb R$.

\noindent
Set $x=(x_1,\ldots,x_n)$. For $\alpha=(\alpha_1,\ldots,\alpha_n)\in \mathbb Q_{\geq 0}^n$, set  $|\alpha|=\sum_{i=1}^n\alpha_i$ and
\begin{equation}\label{VECTPOWER}
	x^{\alpha}=x_1^{\alpha_1}\cdots x_n^{\alpha_n}.
\end{equation}
Let $f(x)=\sum_{|\alpha|\geq 0}a_{\alpha}x^{\alpha}\in\mathbb C[[x_1,\ldots,x_n]]$. We define the \em support of \em $f$, and represent it by $\textrm{Supp}(f)$, as the set
\[
\textrm{Supp}(f)=\{\alpha\in\mathbb N_0^n:a_{\alpha}\neq 0\}.
\]
By definition, the support of the null serie is the empty set. Write $f(x)=\sum_{i\geq m}f_i(x)$, where $f_i$ is a homogenous polynomial of degree $i$ for all $i\geq m$ and $f_m$ is a non null polynomial. We define $\textrm{ord}(f)$, the \em order \em of $f$, as $m$. If $f\in\mathbb C\{x\}$ define $C_0(\{f(x)=0\})$, the \em tangent cone \em of $\{f(x)=0\}$, as the algebraic variety $\{f_m(x)=0\}$. For a polynomial $g\in\mathbb C[x]$, denote the degree of $g$ by $\textrm{deg}(g)$.

\noindent
Set $M$ as a complex manifold of dimension $n+1$, $o\in M$ and $Y$ an irreducible hypersurface of $M$. The germ of $Y$ at $o$ is \em quasi-ordinary \em if there is a system of local coordinates $(x,y)$ centered at $o$ such that the discriminant of $Y$ with respect to the map $p(x,y)=x$ is contained in $\{x_1\cdots x_n=0\}$. From now on $Y=\{f(x,y)=0\}$, $f\in\mathbb C\{x,y\}$, will denote the germ of an irreducible quasi-ordinary hypersurface at the origin. The roots of $f$ are denominated \em branches \em of $Y$ and are fractional power series in the ring $\mathbb C\{x_1^{1/k},\ldots,x_n^{1/k}\}$ (we will abbreviate this notation by $\mathbb C\{x^{1/k}\}$), for some positive integer $k$ (see \cite{LIP}). Let $\zeta(x^{1/k})\in \mathbb C\{x^{1/k}\}$ be a branch of $Y$. There are $\lambda_1,\ldots,\lambda_s\in\mathbb Q^n_{\geq 0}$ such that
\begin{enumerate}
	\item[\em C1)\em]
	$\lambda_1<\lambda_2<\ldots<\lambda_s$;
	\item[\em C2)\em]
	Set $M_0=\mathbb Z^n$ and $M_i=M_0+\mathbb Z^n\lambda_i$, $i=1,\ldots,s$. For all $i\in\{1,\ldots,s\}$, $\lambda_i\not \in M_{i-1}$;
	\item[\em C3)\em]
	$\textrm{ord}(f)=k=k_1k_2\ldots k_s$, where $k_i$ is the smallest positive integer such that $k_i\lambda_i\in M_{i-1}$;
	\item[\em C4)\em]
	For all $i\in\{1,\ldots,s\}$, the coefficient of $x^{\lambda_i}$ in $\zeta(x^{1/k})$ is non null.
	\item[\em C5)\em]
	Let $\alpha\in \mathbb Q^n_{>0}\setminus \{\lambda_1,\ldots,\lambda_s\}$. If  the coefficient of $x^{\alpha}$ in $\zeta(x^{1/k})$ is non null then there is $i\in\{1,\ldots,s\}$ such that $\alpha\in M_i$. Furthermore, if $i$ is the smallest element of $\{1,\ldots,s\}$ such that $\alpha\in M_i$ then $\lambda_i<\alpha$.
\end{enumerate}
If $\xi(x^{1/k})$ is another branch of $Y$ then $\xi(x^{1/k})=\zeta(\theta x^{1/k})$, where $\theta$ is the $k$-th root of unity. We obtain all branches of $Y$ by running through all $k$-th roots of unity. The rational $n$-uples $\lambda_1,\ldots,\lambda_s$ are denominated \em characteristic exponents of \em $Y$ (see Section 1 of \cite{LIP}). Furthermore, if $\zeta\in\mathbb C\{x^{1/k}\}$ admits $\lambda_1,\ldots,\lambda_s\in\mathbb Q^n_{\geq 0}$ that verify $C1)$, $C2)$, $C4)$ and  $C5)$ then $\zeta$ is the branch of a quasi-ordinary hypersurface (see Proposition 1.5 of \cite{LIP}). After a change of coordinates of the type
\begin{equation}\label{CLEANUP}
(x,y)\to (x,y-\varphi(x)),\, \varphi\in\mathbb C\{x\},
\end{equation}
we can assume that no branch of $Y$ contains monomials with positive integer exponent and non null coefficient.

\noindent
By default, zero will belong to all semigroups mentioned on this paper. Set $e_0=1$ and $e_i=k_ie_{i-1}$, $i=1,\ldots,s$. Note that $e_s=k$. Let $u_1,\ldots,u_n$ be the canonical basis of $\mathbb R^n$. For $i\in\{1,\ldots,s\}$, set $\Gamma(\lambda_1,\ldots,\lambda_i)$ as the semigroup generated by $e_iu_1,\ldots, e_iu_n$,
\begin{equation}\label{GEN}
\gamma_1^{(i)}=e_i\lambda_1 \textrm{ and } \gamma_{j+1}^{(i)}=k_j\gamma_j^{(i)}-e_i\lambda_j+e_j\lambda_{j+1}\textrm{ for } j=1,\ldots, i-1.
\end{equation}
The set $\Gamma(\lambda_1,\ldots,\lambda_i)$  is the semigroup of a quasi-ordinary surface with characteristic exponents $\lambda_1,\ldots,\lambda_i$. We will also denote the generators $\gamma_1^{(s)},\ldots,\gamma_s^{(s)}$ of $\Gamma(\lambda_1,\ldots,\lambda_s)$, the semigroup of $Y$, by, respectively,  $\gamma_1,\ldots,\gamma_s$. For $i=1,\ldots,s$, set $\Gamma_i(\lambda_1,\ldots,\lambda_s)$ as the subsemigroup of $\Gamma(\lambda_1,\ldots,\lambda_s)$ generated by $ku_1,\ldots, ku_n$ and $\gamma_j^{(s)}$, $j=1,\ldots,i$. Next Proposition gives us a relation between  $\Gamma_i(\lambda_1,\ldots,\lambda_s)$ and  $\Gamma(\lambda_1,\ldots,\lambda_i)$.

\begin{proposition}\label{SEMIREL}
For all $i=1,\ldots,s-1$,
\begin{equation}\label{CONVERT}
\Gamma_i(\lambda_1,\ldots,\lambda_s)=\frac{e_s}{e_i} \Gamma(\lambda_1,\ldots,\lambda_i).
\end{equation}
\end{proposition}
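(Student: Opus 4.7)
The plan is to reduce the claimed identity of semigroups to an identity on their finite generating sets. Both $\Gamma_i(\lambda_1,\ldots,\lambda_s)$ and $\tfrac{e_s}{e_i}\Gamma(\lambda_1,\ldots,\lambda_i)$ are sub-semigroups of $\mathbb{Q}^n_{\geq 0}$, and each is defined by a finite list of generators: the former by $ku_1,\ldots,ku_n$ together with $\gamma_1^{(s)},\ldots,\gamma_i^{(s)}$, and the latter by the images under multiplication by $\tfrac{e_s}{e_i}$ of the generators $e_iu_1,\ldots,e_iu_n,\gamma_1^{(i)},\ldots,\gamma_i^{(i)}$ of $\Gamma(\lambda_1,\ldots,\lambda_i)$. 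It therefore suffices to verify that these two lists produce the same set of generators, from which the equality of the semigroups follows.

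For the "axis" generators this is immediate: since $k=e_s$, one has $\tfrac{e_s}{e_i}\cdot e_iu_\ell=e_su_\ell=ku_\ell$ for every $\ell\in\{1,\ldots,n\}$. The substantive part is to prove, by finite induction on $j\in\{1,\ldots,i\}$, the identity
\[
\gamma_j^{(s)}=\frac{e_s}{e_i}\,\gamma_j^{(i)}.
\]
The base case $j=1$ is immediate from (\ref{GEN}), since both sides equal $e_s\lambda_1$. For the inductive step, assuming the identity at level $j\leq i-1$, I would write down the recursion (\ref{GEN}) for $\gamma_{j+1}^{(i)}$, multiply both sides by $\tfrac{e_s}{e_i}$, and invoke the inductive hypothesis on $\gamma_j^{(i)}$. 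The factor $k_j$ is untouched by the scaling, and the additive correction terms rescale so as to reproduce precisely the recursion (\ref{GEN}) that defines $\gamma_{j+1}^{(s)}$, closing the induction.

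I do not anticipate a real obstacle here: the proposition is essentially a consequence of the fact that the recursion (\ref{GEN}) transforms in a controlled way under the uniform scaling $\tfrac{e_s}{e_i}$, which is a constant depending on $i$ and $s$ but independent of $j$. The only point that requires care is bookkeeping on the indices, to make sure that throughout the induction $j$ stays in the range $\{1,\ldots,i-1\}$, so that both recursions (the one defining $\gamma^{(i)}$ and the one defining $\gamma^{(s)}$) are simultaneously available. Once the two generating sets are matched, the proposition follows at once by the defining universal property of the semigroup generated by a set.
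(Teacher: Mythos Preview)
Your proposal is correct and follows essentially the same approach as the paper: both arguments reduce the equality of semigroups to an equality of generating sets, matching the axis generators $ku_\ell=\tfrac{e_s}{e_i}(e_iu_\ell)$ directly and proving $\gamma_j^{(s)}=\tfrac{e_s}{e_i}\gamma_j^{(i)}$ for $j=1,\ldots,i$ by induction on $j$ using the recursion (\ref{GEN}). The paper simply writes out the inductive computation explicitly rather than describing it in words.
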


\begin{proof}
We have immediately the following relations
\[
e_su_j=\frac{e_s}{e_i} (e_i u_j),\, j=0,1,\ldots,n,
\]
\[
\gamma_1^{(s)}=\frac{e_s}{e_i}(e_i\lambda_1)=\frac{e_s}{e_i}\gamma_1^{(i)},
\]
\begin{align*}
\gamma_{j+1}^{(s)}=& k_j\gamma_j^{(s)}-e_s\lambda_j+e_s\lambda_{j+1}=
k_j\frac{e_s}{e_i} \gamma^{(i)}_{j}-\frac{e_s}{e_i} e_i\lambda_j+\frac{e_s}{e_i} e_i\lambda_{j+1}=\\
=& \frac{e_s}{e_i}\gamma^{(i)}_{j+1},\, j=1\ldots,i-1.
\end{align*}
\end{proof}

\noindent
On Theorem \ref{NORMALF}, we prove the following: Let $\widetilde{\Gamma}$ be the subgroup of $\mathbb Z^n$ generated by $\Gamma(\lambda_1,\ldots,\lambda_s)$, it exists $c\in \mathbb N_0^n$ such that 
\begin{equation}\label{QUASIC}
	\forall a\in\widetilde{\Gamma}, c\leq a \Rightarrow a\in \Gamma(\lambda_1,\ldots,\lambda_s).
\end{equation}
This results gives us a substitute for the conductor of the semigroup of a plane curve.  The following Lemma is needed for the proof of Theorem \ref{NORMALF}.

\vspace{0.4cm}

\begin{lemma}\label{ALTGENRETI}
The following statements hold.
\begin{enumerate}
\item[$(A)$]
For $i=0,\ldots,s-1$,
\begin{equation}\label{GENALT}
\gamma_{i+1}=k\lambda_{i+1}+\sum_{j=1}^{i}(k_j-1)\gamma_j.
\end{equation}
\item[$(B)$]
For all $i=1,\ldots,s$, $kM_i= k\mathbb Z^n+\sum_{j=1}^{i}\mathbb Z \gamma_j$.
\end{enumerate}
\end{lemma}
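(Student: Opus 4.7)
My plan is to prove both parts by induction on $i$, working from the recursive definition (\ref{GEN}) at the top level $i = s$. Writing $\gamma_j$ for $\gamma_j^{(s)}$ and recalling $e_s = k$, the defining relations read $\gamma_1 = k\lambda_1$ and $\gamma_{j+1} = k_j \gamma_j - k\lambda_j + k\lambda_{j+1}$ for $1 \le j \le s-1$, and this recurrence will be the workhorse of both inductions.

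For part $(A)$, the base case $i = 0$ reduces to $\gamma_1 = k\lambda_1$ with an empty sum. For the inductive step, I would assume $\gamma_i = k\lambda_i + \sum_{j=1}^{i-1}(k_j-1)\gamma_j$, solve this identity for $k\lambda_i$, and substitute into the recurrence for $\gamma_{i+1}$. The $k_i\gamma_i$ and $-\gamma_i$ contributions combine into $(k_i - 1)\gamma_i$, which attaches to the inherited tail $\sum_{j=1}^{i-1}(k_j-1)\gamma_j$ to produce $\sum_{j=1}^{i}(k_j-1)\gamma_j$, while the $k\lambda_{i+1}$ term survives unaltered; regrouping delivers the target identity for $\gamma_{i+1}$.

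For part $(B)$, I would first unwind $M_i = \mathbb{Z}^n + \sum_{j=1}^i \mathbb{Z}\lambda_j$, so that $kM_i = k\mathbb{Z}^n + \sum_{j=1}^i \mathbb{Z}(k\lambda_j)$. The task then becomes showing that the $\mathbb{Z}$-spans of $\{k\lambda_1, \ldots, k\lambda_i\}$ and $\{\gamma_1, \ldots, \gamma_i\}$ coincide modulo $k\mathbb{Z}^n$. Part $(A)$ supplies the required unitriangular change of basis: it exhibits each $\gamma_j$ as a $\mathbb{Z}$-combination of $k\lambda_j$ and earlier $\gamma_\ell$'s, and inverting the same identity recovers $k\lambda_j$ as a $\mathbb{Z}$-combination of $\gamma_1, \ldots, \gamma_j$. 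A short induction on $i$ then yields both inclusions.

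The argument is essentially formal, so I do not anticipate a genuine obstacle. The only care needed is the bookkeeping of the $(k_j - 1)\gamma_j$ contributions in the inductive step of $(A)$, where several terms must be collected in exactly the right way before the clean closed form emerges; once that is in hand, $(B)$ follows mechanically.
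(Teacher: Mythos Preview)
Your proposal is correct and follows essentially the paper's argument: part $(A)$ is the same induction from the recurrence $\gamma_{i+1}=k_i\gamma_i-k\lambda_i+k\lambda_{i+1}$ (the paper substitutes the hypothesis for $\gamma_i$ after splitting $k_i\gamma_i=(k_i-1)\gamma_i+\gamma_i$, whereas you equivalently substitute for $k\lambda_i$). For $(B)$ the paper likewise inducts, tracking an arbitrary element and applying the recurrence (\ref{GEN}) directly rather than invoking the already-proven identity $(A)$, but this is exactly the same unitriangular change of generators you describe.
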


\begin{proof}
We will prove both statements by induction on $i$. We begin with statement \em (A) \em. By (\ref{GEN}),
\begin{align*}
\gamma_{i+1}=&k_i\gamma_i-k\lambda_i+k\lambda_{i+1}=(k_i-1)\gamma_i+\gamma_i-k\lambda_i+k\lambda_{i+1}=\\
=&(k_i-1)\gamma_i+k\lambda_{i}+\sum_{j=1}^{i-1}(k_j-1)\gamma_j-k\lambda_i+k\lambda_{i+1}=k\lambda_{i+1}+\sum_{j=1}^{i}(k_j-1)\gamma_j.
\end{align*}
We now proceed to prove statement \em (B) \em. Let $a\in k\mathbb Z^n+\sum_{j=1}^{i+1}\mathbb Z \gamma_j$.
There exist integers $\alpha_j$, $j=1,\ldots,n$, and $\beta_j$, $j=1,\ldots,i+1$ such that
\begin{equation}\label{COEFCOMB}
	a=\sum_{j=1}^{n}k\alpha_j u_j+\sum_{j=1}^{i+1}\beta_j\gamma_j.
\end{equation}
Set $b=\sum_{j=1}^{n}k\alpha_j u_j+\sum_{j=1}^{i}\beta_j\gamma_j$. Applying equality (\ref{GEN}), we obtain
\[
a=b+\beta_{i+1}k_i\gamma_i-\beta_{i+1}k\lambda_i+\beta_{i+1}k\lambda_{i+1}.
\]
By the induction hypothesis, since $b+\beta_{i+1}k_i\gamma_i\in k\mathbb Z^n+\sum_{j=1}^{i}\mathbb Z \gamma_j$, there is $\delta \in kM_i$ such that
\[
a=\delta-\beta_{i+1}k\lambda_i+\beta_{i+1}k\lambda_{i+1}.
\]
Hence $a\in kM_{i+1}$. 

\noindent
Assume now that $a\in kM_{i+1}$. Taking into account that from (\ref{GEN}) we obtain the equality  $k\lambda_{j+1}=k\lambda_j-k_j\gamma_j+\gamma_{j+1}$, the proof that $a\in k\mathbb Z^n+\sum_{j=1}^{i+1}\mathbb Z \gamma_j$ is similar to the previous one.
\end{proof}

\begin{theorem}\label{NORMALF}
Let $i=1,\ldots,s$ and $a\in k\mathbb Z^n+\sum_{j=1}^{i}\mathbb Z \gamma_j$. There are integers $\alpha_j$, $j=1,\ldots,n$, and $\beta_j$, $j=1,\ldots,i$, such that $0\leq \beta_j\leq k_j-1$ and  $a=\sum_{j=1}^{n}\alpha_j ku_j+\sum_{j=1}^{i}\beta_j\gamma_j$. Furthermore, if $a\geq \sum_{j=1}^{i}(k_j-1)\gamma_j$ then $\alpha_j\geq 0$ for all $j=1,\ldots,n$. 
\end{theorem}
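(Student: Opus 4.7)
My plan is to establish the existence of the representation with $0\leq \beta_j\leq k_j-1$ by induction on $i$, and then extract the positivity of the $\alpha_j$'s as a direct componentwise comparison. The base case $i=1$ is handled by Euclidean division of the coefficient of $\gamma_1$ by $k_1$, since $k_1\gamma_1 = k(k_1\lambda_1) \in k\mathbb{Z}^n$ by condition \emph{C3}.

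The inductive step rests on the claim that $k_i\gamma_i \in kM_{i-1}$, which is where the two parts of Lemma \ref{ALTGENRETI} meet. Using part $(A)$ I would rewrite
\[
k_i\gamma_i = k(k_i\lambda_i) + k_i\sum_{j=1}^{i-1}(k_j-1)\gamma_j.
\]
Condition \emph{C3} places $k_i\lambda_i$ in $M_{i-1}$, so the first summand lies in $kM_{i-1}$, and part $(B)$ identifies $kM_{i-1}$ with $k\mathbb{Z}^n+\sum_{j=1}^{i-1}\mathbb{Z}\gamma_j$, which contains the second summand. Given $a=km+\sum_{j=1}^i c_j\gamma_j$ with $m\in\mathbb{Z}^n$ and $c_j\in\mathbb{Z}$, I would perform Euclidean division $c_i = q k_i + r$ with $0 \leq r \leq k_i - 1$; then $a - r\gamma_i = km + qk_i\gamma_i + \sum_{j<i} c_j\gamma_j$ lies in $kM_{i-1}$, so the inductive hypothesis supplies the $\alpha_j$'s together with $\beta_1,\ldots,\beta_{i-1}$ in the required ranges, and $\beta_i := r$ finishes the representation.

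For the sign statement, I would use that each $\gamma_j$ is non-negative componentwise together with $\beta_j \leq k_j - 1$, giving $\sum_{j=1}^i \beta_j\gamma_j \leq \sum_{j=1}^i (k_j-1)\gamma_j \leq a$ under the standing hypothesis. Then $\sum_j \alpha_j ku_j = a - \sum_j \beta_j \gamma_j$ is componentwise non-negative, and reading off the $j$-th coordinate $k\alpha_j$ yields $\alpha_j \geq 0$ for every $j$.

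I expect the only real obstacle to be the containment $k_i\gamma_i \in kM_{i-1}$: the rest is Euclidean division plus a one-line componentwise comparison. Once that containment is secured via the interplay of \emph{C3} with both parts of Lemma \ref{ALTGENRETI}, the induction runs smoothly and the second assertion is immediate.
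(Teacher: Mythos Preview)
Your proposal is correct and follows essentially the same route as the paper: induction on $i$, Euclidean division on the coefficient of $\gamma_i$, the reduction of $k_i\gamma_i$ into $k\mathbb Z^n+\sum_{j<i}\mathbb Z\gamma_j$ via Lemma~\ref{ALTGENRETI}(A) together with $k_i\lambda_i\in M_{i-1}$ and Lemma~\ref{ALTGENRETI}(B), and then the componentwise comparison using $\gamma_j\geq 0$ and $\beta_j\leq k_j-1$ to force $\alpha_j\geq 0$. The only cosmetic difference is that the paper phrases the final inequality as $\sum_j \alpha_j k u_j \geq \sum_j (k_j-\beta_j-1)\gamma_j\geq 0$, which is the same computation you wrote from the other side.
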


\begin{proof}
We will proof the result by induction on $i$. There are integers $\alpha'_j$, $j=1,\ldots,n$, and $\beta'_j$, $j=1,\ldots,i$ such that
\begin{equation}\label{SEMICOMB}
	a=\sum_{j=1}^{n}\alpha'_jku_j+\sum_{j=1}^{i}\beta'_j\gamma_j.
\end{equation}
Let $\ell$ be the  integer such that $0\leq \beta_i-\ell k_i <k_i$. We can rewrite (\ref{SEMICOMB}) as
\[
a=\sum_{j=1}^{n}\alpha'_jku_j+\sum_{j=1}^{i-1}\beta'_j\gamma_j+(\beta'_i-\ell k_i)\gamma_i+\ell k_i\gamma_i.
\]
By (\ref{GENALT}),
\[
k_i\gamma_i=k_i k\lambda_i+\sum_{j=1}^{i-1}k_i(k_j-1)\gamma_j.
\]
Since $k_i\lambda_i\in M_{i-1}$, by statement \em (B) \em of Lemma \ref{ALTGENRETI}, $k(k_i\lambda_i)\in k\mathbb Z^n+\sum_{j=1}^{i-1}\mathbb Z \gamma_j$. Using the induction hypothesis, there are $\alpha_j$, $j=1,\ldots,n$, and $\beta_j$, $j=1,\ldots,i-1$, such that $0\leq \beta_j\leq k_j-1$ and 
\[
\sum_{j=1}^{n}\alpha'_jku_j+\sum_{j=1}^{i-1}\beta'_j\gamma_j+\ell k_i\gamma_i=\sum_{j=1}^{n}\alpha_jku_j+\sum_{j=1}^{i-1}\beta_j\gamma_j.
\]
Set $\beta_i=\beta'_i-\ell k_i$. Assume that
\[
a=\sum_{j=1}^{n}\alpha_jku_j+\sum_{j=1}^{i}\beta_j\gamma_j\geq \sum_{j=1}^{i}(k_j-1)\gamma_j.
\]
Hence
\[
\sum_{j=1}^{n}\alpha_jku_j\geq \sum_{j=1}^{i}(k_j-(\beta_j+1))\gamma_j.
\]
Since $0\leq \beta_j\leq k_j-1$ and $k_j\geq 2$ for all $j=1,\ldots,i$, we conclude that $\alpha_j\geq 0$ for all $j=1,\ldots,n$.
\end{proof}

\noindent
In the following proposition we prove some useful properties. Statement \em (C) \em will play an important role in the proof of statement $(A)$ of Theorem \ref{MAINTHEO}. Statement \em (D) \em will be used in the proof of Lemma \ref{LOWERMULT}.

\begin{proposition}\label{EQINEQ}
For all $i=1,\ldots,s$, the following statements hold:
\begin{enumerate}
	\item[$(A)$]
	$\displaystyle \sum_{\ell=1}^{i}(k_{\ell}-1)\gamma_{\ell}=k_i\gamma_i-k\lambda_i$;
	\item[$(B)$]
	$\displaystyle k_i\gamma_i>\sum_{\ell=1}^{i}(k_{\ell}-1)\gamma_{\ell}$;
	\item[$(C)$]
	$k_i\gamma_i\in \Gamma_{i-1}(\lambda_1,\ldots,\lambda_s)$, for all $i=1,\ldots,s$;
	\item[$(D)$]
	 For $s\geq 2$ and $i=1,\ldots,s$, 	
	\[
	e_{i-1}\gamma_{s}-e_{s-1}\gamma_{i}>0.
	\]
\end{enumerate}
\end{proposition}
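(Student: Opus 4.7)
The plan is to derive (A) from Lemma~\ref{ALTGENRETI}(A) by a short rearrangement, read (B) off from (A), combine (A), (B), Lemma~\ref{ALTGENRETI}(B) and Theorem~\ref{NORMALF} for (C), and prove (D) via a strictly-increasing-ratio argument.

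For (A), I would apply Lemma~\ref{ALTGENRETI}(A) with index $i-1$ to get $\gamma_i = k\lambda_i + \sum_{j=1}^{i-1}(k_j-1)\gamma_j$; adding $(k_i-1)\gamma_i$ to both sides and transposing $k\lambda_i$ yields $\sum_{\ell=1}^{i}(k_\ell-1)\gamma_\ell = k_i\gamma_i - k\lambda_i$. (The case $i=1$ is immediate from $\gamma_1 = e_s\lambda_1 = k\lambda_1$.) Statement (B) is then a one-line consequence of (A): the difference in question equals $k\lambda_i$, which is $>0$ in the partial order (\ref{PARTORD}) because $\lambda_i \in \mathbb{Q}_{\geq 0}^n$ and $\lambda_i \neq 0$ (forced by C2), since $0 \in M_{i-1}$).

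For (C), I would first locate $k_i\gamma_i$ in the lattice $k\mathbb{Z}^n + \sum_{j=1}^{i-1}\mathbb{Z}\gamma_j$ and then upgrade this lattice membership to semigroup membership via Theorem~\ref{NORMALF}. Multiplying the identity $\gamma_i = k\lambda_i + \sum_{j=1}^{i-1}(k_j-1)\gamma_j$ by $k_i$ rewrites as $k_i\gamma_i = k(k_i\lambda_i) + k_i\sum_{j=1}^{i-1}(k_j-1)\gamma_j$; by C3) we have $k_i\lambda_i \in M_{i-1}$, so Lemma~\ref{ALTGENRETI}(B) places the first summand in $kM_{i-1} = k\mathbb{Z}^n + \sum_{j=1}^{i-1}\mathbb{Z}\gamma_j$, while the second summand is visibly already in this lattice. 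Moreover, (B) gives $k_i\gamma_i > \sum_{\ell=1}^{i}(k_\ell-1)\gamma_\ell \geq \sum_{\ell=1}^{i-1}(k_\ell-1)\gamma_\ell$, so Theorem~\ref{NORMALF} applied at level $i-1$ supplies an expression of $k_i\gamma_i$ with non-negative integer coefficients in the $ku_j$'s and with $0 \leq \beta_j \leq k_j-1$ in the $\gamma_j$'s, which is exactly membership in $\Gamma_{i-1}(\lambda_1,\ldots,\lambda_s)$.

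For (D), the key step is to establish $\gamma_{j+1} > k_j\gamma_j$ for every $j = 1,\ldots,s-1$; rewriting (\ref{GEN}) (in the form already used in the proof of Lemma~\ref{ALTGENRETI}(A)) as $\gamma_{j+1} = k_j\gamma_j + k(\lambda_{j+1}-\lambda_j)$ and invoking $\lambda_{j+1} > \lambda_j$ from C1) makes this immediate in the partial order (\ref{PARTORD}). Multiplying by $e_{j-1}$ and using $e_j = k_j e_{j-1}$ converts this into $e_{j-1}\gamma_{j+1} > e_j\gamma_j$, so the sequence $(\gamma_j/e_{j-1})_{j=1}^s$ is strictly increasing; transitivity of the strict partial order then yields $e_{i-1}\gamma_s > e_{s-1}\gamma_i$ for every $i < s$, which is the stated inequality. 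The only slightly delicate point is in (C), where one has to line up the index in Theorem~\ref{NORMALF} (namely $i-1$, not $i$) so that $k_i\gamma_i$ really dominates $\sum_{\ell=1}^{i-1}(k_\ell-1)\gamma_\ell$; the rest of the argument is a direct manipulation of the defining relations.
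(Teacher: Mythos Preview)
Your proof is correct, and for parts (A) and (B) it coincides with the paper's argument. For (C) and (D) you take slightly different but closely related routes.

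For (C), the paper argues by induction on $i$: it rewrites $k_{i+1}\gamma_{i+1}$ via (\ref{GEN}) as $k_{i+1}(k_i\gamma_i)-k_{i+1}(k\lambda_i)+k(k_{i+1}\lambda_{i+1})$, places each piece in the lattice $k\mathbb Z^n+\sum_{j=1}^{i}\mathbb Z\gamma_j$ using Lemma~\ref{ALTGENRETI}(B), and then invokes Theorem~\ref{NORMALF} at level $i$. You instead work directly at level $i-1$: multiplying the formula of Lemma~\ref{ALTGENRETI}(A) by $k_i$ gives $k_i\gamma_i=k(k_i\lambda_i)+k_i\sum_{j=1}^{i-1}(k_j-1)\gamma_j$, and since $k_i\lambda_i\in M_{i-1}$ by C3) this lands in $k\mathbb Z^n+\sum_{j=1}^{i-1}\mathbb Z\gamma_j$; Theorem~\ref{NORMALF} at level $i-1$ then finishes. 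Your version is a bit cleaner because it dispenses with the inductive framing (which in the paper's proof is not really used in the lattice-membership step anyway). One small point you glide over: Theorem~\ref{NORMALF} is only stated for indices $\geq 1$, so the base case $i=1$ (i.e.\ $k_1\gamma_1=k(k_1\lambda_1)\in k\mathbb N_0^n=\Gamma_0$) should be handled separately, which is immediate from $k_1\lambda_1\in M_0\cap\mathbb Q_{\geq 0}^n=\mathbb N_0^n$.

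For (D), the paper produces the explicit telescoping identity
\[
e_{i-1}\gamma_s-e_{s-1}\gamma_i=\sum_{j=i+1}^{s}\frac{e_{i-1}}{e_{j-1}}\,e_{s-1}\bigl(\gamma_j-k_{j-1}\gamma_{j-1}\bigr)
\]
and observes that each summand is $>0$. Your ``strictly increasing ratio'' formulation $e_{j-1}\gamma_{j+1}>e_j\gamma_j$ is exactly the same inequality $\gamma_{j+1}>k_j\gamma_j$ chained by transitivity; the telescoping sum is just the explicit bookkeeping of that chaining. Both arguments cover only $i<s$ (for $i=s$ the expression vanishes), which is all that is used later.
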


\begin{proof}
By statement \em (A) \em of Lemma \ref{ALTGENRETI},
\[
\displaystyle \sum_{\ell=1}^{i}(k_{\ell}-1)\gamma_{\ell}=(k_i-1)\gamma_i+\displaystyle \sum_{\ell=1}^{i-1}(k_{\ell}-1)\gamma_{\ell}=(k_i-1)\gamma_i+ \gamma_i-k\lambda_i=k_i\gamma_i-k\lambda_i.
\]
Statement $(B)$ results immediately from statement $(A)$.

\noindent
We prove $(C)$ by induction on $i$. By (\ref{GEN}), the equality
\[
k_{i+1}\gamma_{i+1}=k_{i+1}(k_i\gamma_i)-k_{i+1}(k\lambda_i)+k(k_{i+1}\lambda_{i+1})
\]
holds. From the induction hypothesis, $k_i\gamma_i$ belongs to $\Gamma_{i-1}(\lambda_1,\ldots,\lambda_s)$, which is a subsemigroup of $\Gamma_{i}(\lambda_1,\ldots,\lambda_s)$.
Given that $k\lambda_i\in kM_i$, statement \em (B) \em of Lemma \ref{ALTGENRETI} implies that  $k\lambda_i\in k\mathbb Z^n+\sum_{j=1}^{i}\mathbb Z \gamma_j$.
Since $k_{i+1}\lambda_{i+1}\in M_{i}$, then once again by statement \em (B) \em of Lemma \ref{ALTGENRETI}, we conclude that $k(k_{i+1}\lambda_{i+1})\in k\mathbb Z^n+\sum_{j=1}^{i}\mathbb Z \gamma_j$. Hence $k_{i+1}\gamma_{i+1}\in k\mathbb Z^n+\sum_{j=1}^{i}\mathbb Z \gamma_{j}$. By statement $(B)$, 
\[
k_{i+1}\gamma_{i+1}>\sum_{\ell=1}^{i+1}(k_{\ell}-1)\gamma_{\ell}.
\]
Furthermore, the inequality 
\[
\sum_{j=1}^{i+1}(k_j-1)\gamma_{j}\geq \sum_{j=1}^{i}(k_j-1)\gamma_{j}
\]
holds trivially. Therefore, Theorem \ref{NORMALF} allows us to conclude that  $k_{i+1}\gamma_{i+1}$ belongs to the semigroup $\Gamma_{i}(\lambda_1,\ldots,\lambda_s)$.

\noindent
To prove statement $(D)$ we rewrite the quantity  $e_{i-1}\gamma_{s}-e_{s-1}\gamma_{i}$ in the following manner:
\begin{multline*}
	e_{i-1}\gamma_{s}-e_{s-1}\gamma_{i}=e_{i-1}\gamma_{s}-e_{s-1}\gamma_{i}+\sum_{j=i+1}^{s-1}\frac{e_{i-1}}{e_{j-1}}e_{s-1}\gamma_j-\sum_{j=i+1}^{s-1}\frac{e_{i-1}}{e_{j-1}}e_{s-1}\gamma_j=\\
	=\sum_{j=i+1}^{s}\frac{e_{i-1}}{e_{j-1}}e_{s-1}\gamma_j-\sum_{j=i}^{s-1}\frac{e_{i-1}}{e_{j-1}}e_{s-1}\gamma_j=\sum_{j=i+1}^{s}\frac{e_{i-1}}{e_{j-1}}e_{s-1}\gamma_j-\sum_{j=i+1}^{s}\frac{e_{i-1}}{e_{j-2}}e_{s-1}\gamma_{j-1}=\\
	=\sum_{j=i+1}^{s}\frac{e_{i-1}}{e_{j-1}}e_{s-1}\left(\gamma_j-\frac{e_{j-1}}{e_{j-2}}\gamma_{j-1}\right)=	\sum_{j=i+1}^{s}\frac{e_{i-1}}{e_{j-1}}e_{s-1}\left(\gamma_j-k_{j-1}\gamma_{j-1}\right).
	\qquad\qquad
\end{multline*}
For all $i=1,\ldots,s$, by definition $k_i>0$ which implies $e_i>0$. Furthermore, from  (\ref{GEN}) we conclude that $\gamma_i-k_{i-1}\gamma_{i-1}>0$. Therefore
\[
\sum_{j=i+1}^{s}\frac{e_{i-1}}{e_{j-1}}e_{s-1}\left(\gamma_j-k_{j-1}\gamma_{j-1}\right)>0.
\]
\end{proof}

\noindent
For the rest of this paper we will assume that $n=1$, so $Y$ is the germ at the origin of an irreducible plane curve.  The change of coordinates (\ref{CLEANUP}) allows us to assume that $C_0(Y)=\{y=0\}$. The characteristic exponents of an irreducible plane curve are equisingularity invariants (see Theorem 21 of \cite{BK}). For the \em Puiseux characteristic \em version of the characteristic exponents see Section 3.1 of \cite{WALL} and Section 3 of \cite{ZAR}. 

\noindent
To a branch $\zeta(x^{1/k})$ of $Y$, we associate a parametrization $\iota$ of $Y$ defined by $x=t^k,\,y=\zeta(t)$. By \em C4) \em and \em C5)\em, we can write the parametrization $\iota$ as
\begin{equation}\label{PAR}
	x=t^k,\, y=c_1t^{k\lambda_1}+\varphi_1(t)+c_2t^{k\lambda_2}+\varphi_2(t)+\cdots+c_st^{k\lambda_s}+\psi,
\end{equation}
where 
\begin{enumerate}
	\item[$(p1).$]
	$c_1,\ldots,c_s\in\mathbb C^{\ast}$, $\varphi_1,\ldots,\varphi_{s-1}\in(t)\mathbb C[t]$ and $\psi\in(t)\mathbb C\{t\}$;
	\item[$(p2).$]
	for all $i\in\{1,\ldots,s-1\}$, $\textrm{ord}(\varphi_i)>k\lambda_i$ and $\textrm{Supp}(\varphi_i)\subset kM_i$. Furthermore $\textrm{ord}(\psi)>k\lambda_s$ and $\textrm{Supp}(\psi)\subset kM_s$;
	\item[$(p3).$]
	for all $i\in\{1,\ldots,s-1\}$, $\textrm{deg}(\varphi_i)<k\lambda_{i+1}$ .
\end{enumerate}
The parametrization $\iota$ induces a map $\vartheta_{\iota}:\mathbb C[[x,y]]\to \mathbb N_0^n\cup \{+\infty\}$ defined by
\[
\vartheta_{\iota}(f)=\textrm{ord}(\iota^{\ast}f).
\]
By convention, $\vartheta_{\iota}(0)=+\infty$. This map verifies
\begin{eqnarray}
	\label{VALPROD}
	\vartheta_{\iota}(fg)=\vartheta_{\iota}(f)+\vartheta_{\iota}(g),\\
	\label{VALSUM}
	\vartheta_{\iota}(f+g)\geq \min\{\vartheta_{\iota}(f),\vartheta_{\iota}(g)\},
\end{eqnarray}
that is, $\vartheta_{\iota}$ is a valuation. Zariski proved in Theorem 3.9 of \cite{ZAR} that 
\begin{equation}\label{VALSEMIG}
\vartheta_{\iota}(\mathbb C[[x,y]] )=\Gamma(\lambda_1,\ldots,\lambda_s)\cup \{+\infty\}.
\end{equation}
This equality will play an important role in Section \ref{MAIN}.

\section{Newton Polytope of the Equation}\label{NPE}

\noindent
Let $\iota$ be as in (\ref{PAR}). Set $e_0=1$ and $e_i=k_ie_{i-1}$, $i=1,\ldots,s$ (see Section \ref{SQO}). For $i=1,\ldots,s$, define $\iota_i$ as the polynomial parametrization
\begin{equation}\label{PARi}
	x=t^{e_i},\, y=c_1t^{e_i\lambda_1}+\varphi_1(t^{e_i/k})+\cdots+c_it^{e_i\lambda_i}+\varphi_i(t^{e_i/k}),
\end{equation}
where $\textrm{Supp}(\varphi_s)\subseteq \textrm{Supp}(\psi)$. The parametrization $\iota_i$, $i\in\{1,\ldots,s\}$, is associated to a irreducible plane curve $Y_i$ (see Proposition 1.5. of \cite{LIP}) with characteristic exponents $\lambda_1,\ldots,\lambda_i$, multiplicity $e_i$ and branch
\begin{equation}\label{BRANCHi}
\zeta(x^{1/e_i})=c_1x^{\lambda_1}+\varphi_1(x^{1/e_i})+\cdots+c_{i}x^{\lambda_{i}}+\varphi_{i}(x^{1/e_i}).
\end{equation}
Furthermore, by our choice of coordinates, $C_0(Y_i)=\{y=0\}$. If one is interested in studying only the topology of $Y$ then one can consider $\varphi_s$ the null series.

\noindent
By Theorem $1.1$ of \cite{CM}, for $i\in\{1,\ldots,s\}$, there is $f_i\in (x,y)\mathbb C[x,y]$ such that $\iota_i^{\ast}f_i=0$, that is, $Y_i=\{f_i(x,y)=0\}$. Furthermore, Theorem $1.1$ of \cite{CM} allows us to compute a set $N_i$ such that $\textrm{Supp}(f_i)\subseteq N_i$. 

\noindent
For $h\in\mathbb C\{t\}$ and $a\in \mathbb C$, define $m_{a}(h)$ as the multiplicity of $a$ as zero of $h$ and $m_{\infty}(h)=-\textrm{deg}(h)$. For $i=1,\ldots,s$, if $\varphi_i$ is non null, define
\[
\mu_i=\frac{1}{e_i}\textrm{deg}(\varphi_i(t^{e_i/k}))\in M_i,
 \]
otherwise define $\mu_i=\lambda_i$. 

\noindent
Fix $i\in\{1,\ldots,s\}$. Let $a_j$, $j=1,\ldots,\tau_i$, be the zeros of $t^{-e_i\lambda_1}\iota_i^{\ast}y$, which is a polynomial with non null independent term $c_1$. Set $\kappa_j=m_{a_j}(t^{-e_i\lambda_i}\iota_i^{\ast}y)\neq 0$. Let
\[
\vec{v}_0=(m_0(\iota_i^{\ast}x),m_0(\iota_i^{\ast}y))=(e_i,e_i\lambda_1),
\]
\[
\vec{v}_{a_j}=(m_{a_j}(\iota_i^{\ast}x),m_{a_j}(\iota_i^{\ast}y))=(0,\kappa_i),\, j=1,\ldots,\tau_i,
\]
and
\[
\vec{v}_{\infty}=(m_{\infty}(\iota_i^{\ast}x),m_{\infty}(\iota_i^{\ast}y))=(-e_i,-e_i\mu_i).
\]
By construction
\[
\sum_{j=1}^{\tau_i}\vec{v}_{a_j}=(0,e_i\mu_i-e_i\lambda_1).
\]
As described in Section 1 of \cite{CM}, we rotate the vectors $\vec{v}_0$, $\vec{v}_{\infty}$ and $\vec{v}_{a_j}$, $j=1,\ldots,\tau_i$ by 90 degrees clockwise and concatenating them following their directions counter-clockwise, obtaining a polytope $B_i$. To $B_i$   we apply a translation such that the new polytope $B_i$ is contained in the first quadrant and intersects the axis. Hence $B_i$ is the convex hull of $\{(0,e_i),(e_i\lambda_1,0),(e_i\mu_i,0)\}$. Set
\begin{multline}\label{Ni}
N_i=B_i\cap \mathbb Z^2=\\
\{(\alpha,\beta)\in\mathbb N_0^2:k_1\alpha+(k_1\lambda_1)\beta\geq e_i(k_1\lambda_1)\wedge e_i\alpha+(e_i\mu_i)\beta\leq e_i(e_i\mu_i)\}.
\end{multline}
We give another useful characterization of the set $N_i$. Let $(\alpha,\beta)\in\mathbb N_0^2\setminus\{(0,0)\}$. we have
\[
\textrm{Supp}(\iota_i^{\ast}x^{\alpha}y^{\beta})=\left\{e_i\alpha+\sum_{\ell=1}^{\beta}a_{\ell}:a_1,\ldots,a_{\beta}\in\textrm{Supp}(\iota_i^{\ast}y)\right\}.
\]
Hence
\[
\min \textrm{Supp}(\iota_i^{\ast}x^{\alpha}y^{\beta})=e_i\alpha+(e_i\lambda_1)\beta,\,\max \textrm{Supp}(\iota_i^{\ast}x^{\alpha}y^{\beta})=e_i\alpha+(e_i\mu_i)\beta.
\]
Therefore $(\alpha,\beta)\in N_i$ if and only if
\begin{equation}\label{NiALT}
	\textrm{Supp}(\iota_i^{\ast}x^{\alpha}y^{\beta})\subset [e_i(e_i\lambda_1),e_i(e_i\mu_i)]
\end{equation}

\begin{example}\label{EX1.1}
Let $c_{\ell}\in\mathbb C^{\ast}$, $\ell=1,2,3$, and $\zeta=c_1 x^{3/2}+c_2 x^{5/3}+c_3 x^{23/12}$. Then $\zeta$ is a branch of an irreducible plane curve $Y$ with characteristic exponents $\lambda_1=3/2$,  $\lambda_2=5/3$ and  $\lambda_3=23/12$. We have $k_1=2$ and $k_2=3$. Since
\[
2\lambda_3=2\lambda_2+\lambda_1-1\in M_2
\]
then $k_3=2$. The semigroup $\Gamma(\lambda_1,\lambda_2)$ is generated by 
$\{6,9,19\}$ and $\Gamma(\lambda_1,\lambda_2,\lambda_3)$ is generated by 
$\{12,18,38,117\}$. We present $N_2$ and $N_3$ in figures (\ref{exem1N2}) and (\ref{exem1N3}), respectively.
\begin{figure}[h!]
	\centering
	\includegraphics[scale=0.4]{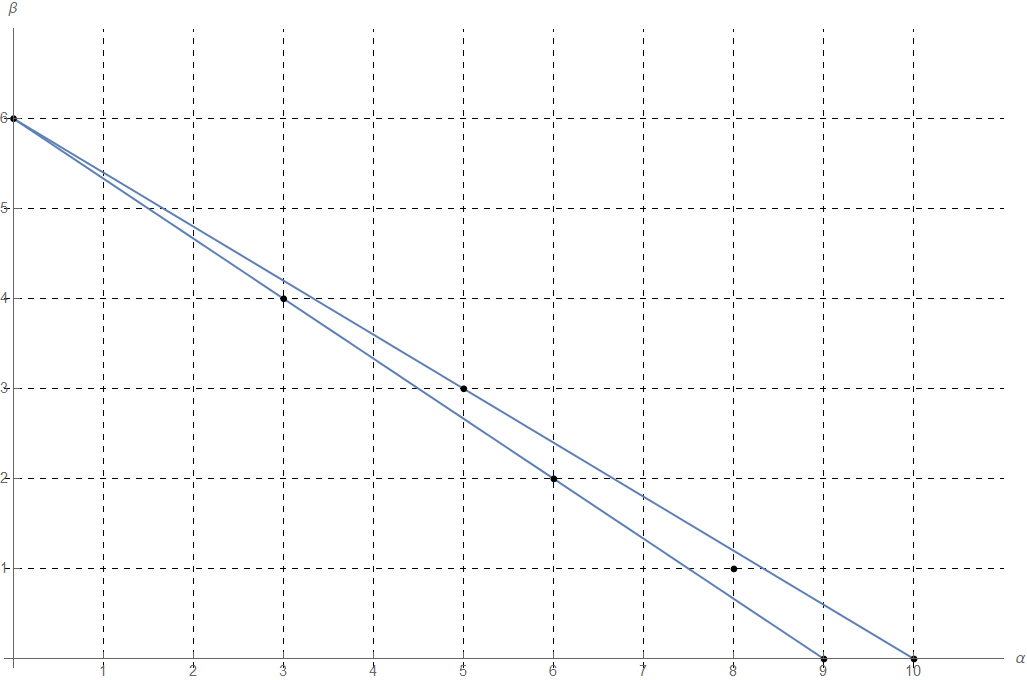}
	\caption{$N_2$}
	\label{exem1N2}
\end{figure}
\begin{figure}[h!]
	\centering
	\includegraphics[scale=0.4]{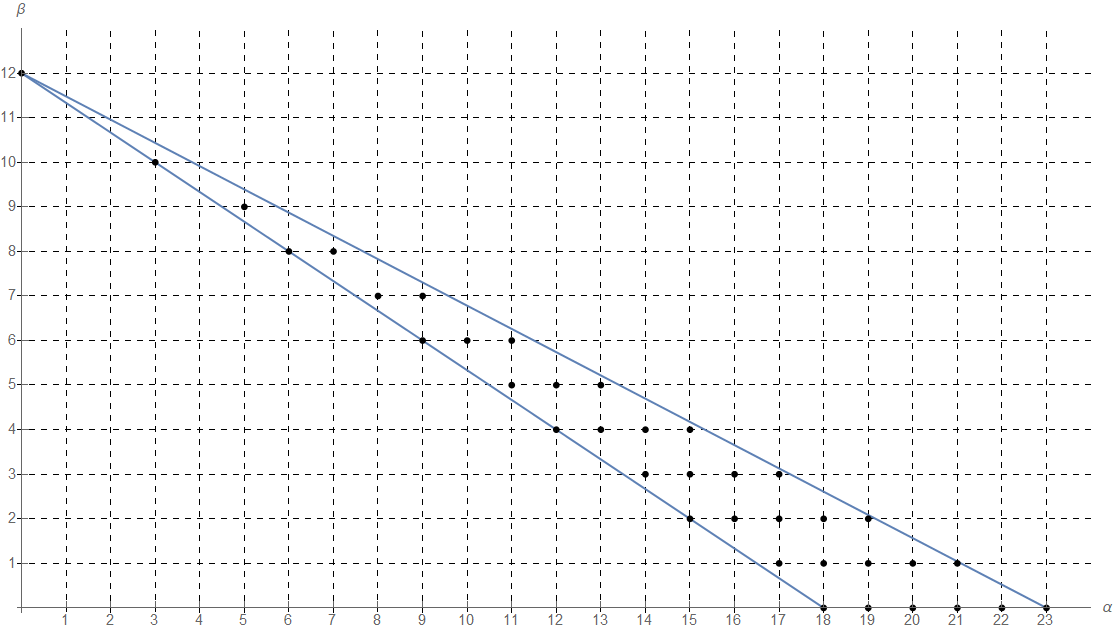}
	\caption{$N_3$}
	\label{exem1N3}
\end{figure}
\end{example}

\begin{example}\label{EX2.1}
Let $c_{\ell}\in\mathbb C^{\ast}$, $\ell=1,2,3$, and $\zeta=c_1 x^{3/2}+c_2 x^{5/2}+c_3 x^{8/3}+c_4 x^{10/3}$. We have
\[
\frac{5}{2}=\frac{3}{2}+1 \textrm{ and } \frac{10}{3}=2\,\frac{8}{3}-2.
\]
Then $\zeta$ is a branch of an irreducible plane curve $Y$ with characteristic exponents $\lambda_1=3/2$ and  $\lambda_2=8/3$. Furthermore, $k_1=2$ and $k_2=3$. The semigroup $\Gamma(\lambda_1)$ is generated by 
$\{2,3\}$ and $\Gamma(\lambda_1,\lambda_2)$ is generated by 
$\{6,9,25\}$. We present $N_1$ and $N_2$ in figures (\ref{exem2N1}) and (\ref{exem2N2}), respectively.
\begin{figure}[h!]
	\centering
	\includegraphics[scale=0.4]{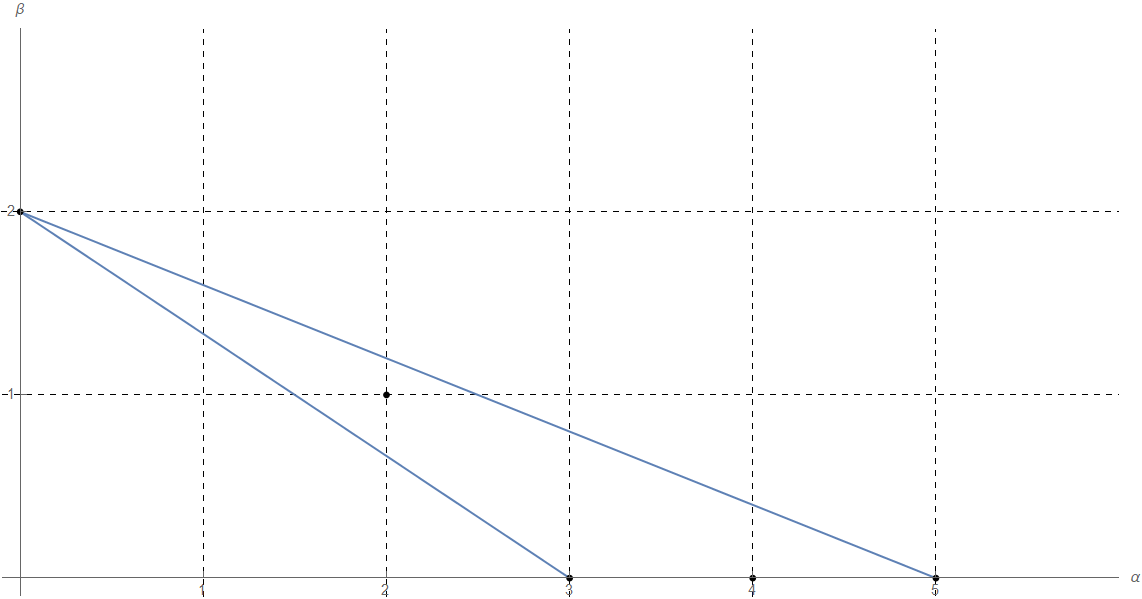}
	\caption{$N_1$}
	\label{exem2N1}
\end{figure}
\begin{figure}[h!]
	\centering
	\includegraphics[scale=0.4]{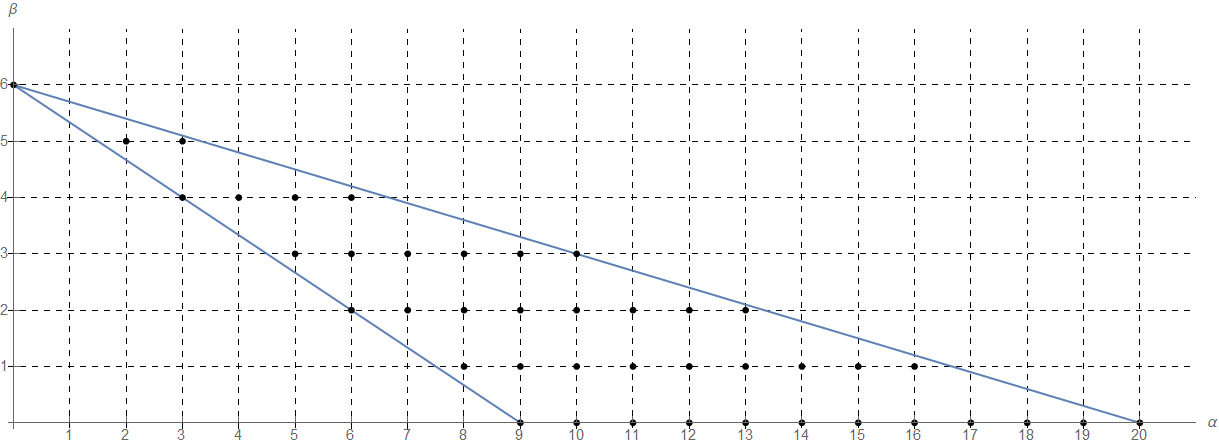}
	\caption{$N_2$}
	\label{exem2N2}
\end{figure}
\end{example}

\noindent
Let $n\in\mathbb N$. We say that $g(x,y)=y^n+\sum_{\ell=0}^{n-1}a_{\ell}(x)y^{\ell}\in\mathbb C[[x]][y]$ is a \em Weierstrass polynomial in the variable \em $y$ if $a_{\ell}(0)=0$, for all $\ell\in\{0,\ldots,n-1\}$. We call $n$ the \em degree \em of $g$.

\noindent
Let $\mathbb R^2_{\geq 0}$ be the first quadrant of $\mathbb R^2$. Let $\widetilde{N}_i$ be the \em Newton polygon \em of $f_i$, that is, the convex hull of the set
\[
\textrm{Supp}(f_i)+\mathbb R^2_{\geq 0}.
\]
This definition is the Local Analytic Geometry version of the Newton polytope of Tropical Geometry (See definition 1.1.3. of \cite{MSTY}). Since $Y_i$ is irreducible, with multiplicity $e_i$ and first characteristic exponent $\lambda_1$, $N_i$ and $\widetilde{N}_i$ share a compact face, namely the line segment that unites $(0,e_i)$ and $(0,e_i\lambda_1)$ (see Section 8.3 of \cite{BK}). Hence
\begin{equation}\label{COMPACFACE}
	\{(0,e_i),(e_i\lambda_1,0)\}\subset\textrm{Supp}(f_i) 
\end{equation}
Furthermore, the point $(0,\beta)$ belongs to $N_i$ if and only if $\beta=e_i$. Therefore, there are $a_{\ell}\in\mathbb C[x]$, $\ell\in\{0,\ldots,e_i-1\}$, and $a\in\mathbb C^{\ast}$ such that
\begin{equation}\label{WEIERPOL}
f_i(x,y)=a y^{e_i}+\sum_{\ell=0}^{e_i-1}a_{\ell}(x)y^{\ell}
\end{equation}
and, for all $\ell\in\{0,\ldots,e_i-1\}$, $a_{\ell}(0)=0$. The notion of degree can be generalized as the greatest power of $y$ that appears on $p\in\mathbb C[[x]][y]$ with non null coefficient. We will denote this notion of degree as $\textrm{deg}_y(p)$. If $p(x,y)$ is the zero polynomial then we set $\textrm{deg}_y(p)=+\infty$. After multiplying $f_i$ by $1/a$, we can assume that $a=1$ and $f_i$ is a Weierstrass polynomial in the variable $y$ of degree $e_i$. From now on, we will always assume that $f_i$ is a Weierstrass polynomial.

\section{Main Result}\label{MAIN}

\noindent
In this section we present our Main Theorem, Theorem \ref{MAINTHEO}. To highlight some key facts of the proof of Theorem \ref{MAINTHEO}, we will begin by presenting an example.

\begin{example}\label{EX1.2}
We return to Example \ref{EX1.1}. The parametrization $\iota$ is defined by
\[
x=t^{12},\, y=c_1 t^{18}+c_2 t^{20}+ c_3 t^{23}.
\]
The induced parametrizations are
\[
\iota_1: x=t^2,\, y=c_1 t^3,\, \iota_2: x=t^{6},\, y=c_1 t^{9}+c_2 t^{10}
\]
and $\iota_3$ is the same as $\iota$. Let $\delta_2(x,y)=-2c_2^3y^3x^5-6c_1^2c_2^3yx^8+c_2^6x^{10}$, $f_1(x,y)= y^2-c_1^2 x^3$ and $f_2(x,y)=f_1^3(x,y)+\delta_2(x,y)$. Note that $\iota_1^{\ast}f_1=0$. We perform the Weierstrass division of $\delta_2$ by $f_1$ elevated to the highest power such that its order is not bigger then the order of $\delta_2$, that is, we divide by $f_1$ and obtain
\[
\delta_2(x,y)=-2c_2^3yx^5f_1(x,y)-8c_1^2c_2^3yx^8+c_2^6x^{10}.
\]
In our case, the remainder of the division has order strictly lower then the order of $f_1$. We will show that this decomposition is unique.
A simple computation in \em Mathematica \em (see \cite{MATH}) or \em Singular \em show us that $\vartheta_{\iota_2}(f_1)=19$, $\iota_2^{\ast}f_2=0$,
\[
\vartheta_{\iota_2}(f_1^3)=\vartheta_{\iota_2}(-8c_1^2c_2^3yx^8+c_2^6x^{10})<\vartheta_{\iota_2}(-2c_2^3yx^5f_1)
\]
and $\vartheta_{\iota_3}(f_2)=117$. On Section \ref{CA} we present an algorithm, based on the proof of Theorem \ref{MAINTHEO}, that allows us to obtain $f_2(x,y)$ from $f_1^3(x,y)$.
\end{example}

\noindent
Let $w\in(x,y)\mathbb C\{x,y\}$, $C=\{w(x,y)=0\}$ a curve and $\tau$ a parametrization induced by a branch of $C$ (see (\ref{PAR})). We can also associate an ideal to the curve $C$ in the ring $\mathbb C[[x,y]]$,  the principal ideal generated by $w$ in $\mathbb C[[x,y]]$, which we denote by $\mathcal I_C \mathbb C[[x,y]]$. Since $\mathbb C\{x,y\}$ is a subring of $\mathbb C[[x,y]]$ there is a canonical injection from $\mathcal I_C \mathbb C\{x,y\}$ to $\mathcal I_C \mathbb C[[x,y]]$. Let $h\in\mathbb C[[x,y]]$, then $h\in \mathcal I_C \mathbb C[[x,y]]$ if and only if $h$ belongs to the kernel of the canonical homomorphism $\mathbb C[[x,y]] \to \sfrac{\mathbb C[[x,y]]}{\mathcal I_C \mathbb C[[x,y]]}$, which is equivalent to $\tau^{\ast}h=0$.

\noindent
Let $\iota$ be as in (\ref{PAR}), $\iota_i$ and $f_i$, $i=1,\ldots,s$ as in Section \ref{NPE}. For $i\in \{1,\ldots,s\}$, let $e_i,\gamma_1^{(i)},\ldots,\gamma^{(i)}_i$ be the generators of the semigroup $\Gamma(\lambda_1,\ldots,\lambda_i)$ as in (\ref{GEN}). Let $f_0(x,y)=y\in\mathbb C[x,y]$.

\begin{theorem}[Main Theorem]\label{MAINTHEO}
For all $i\in\{1,\ldots,s\}$, the following statements hold:
\begin{enumerate}
\item[$(A)$]
There is $\delta_i\in\mathbb C[x,y]$ such that $\textrm{Supp}(\delta_i)\subset N_i\setminus\{(0,e_i)\}$ and $f_i(x,y)=f^{k_i}_{i-1}(x,y)+\delta_i(x,y)$.
\item[$(B)$]
For all $j\in \{i,\ldots,s\}$, $\vartheta_{\iota_j}(f_{i-1})=\gamma^{(j)}_i$ and $\vartheta_{\iota_j}(\partial_yf_{i-1})=\gamma^{(j)}_i-e_j\lambda_i$.
\end{enumerate} 
Furthermore if $g_i$ is a polynomial such that $\iota_i^{\ast}g_i=0$ and $\textrm{Supp}(g_i)\subseteq N_i$ then there is $a\in\mathbb C^{\ast}$ such that $g_i=af_i$.
\end{theorem}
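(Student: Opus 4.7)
\noindent
My plan is to prove $(A)$, $(B)$ and the uniqueness clause simultaneously by induction on $i$, with the convention $f_0(x,y)=y$. The base case $i=1$ is direct: $f_0^{k_1}=y^{e_1}$, and by (\ref{COMPACFACE}) together with the observation that $(0,e_1)$ is the only point of $N_1$ on the $\beta$-axis, $\delta_1:=f_1-y^{e_1}$ has support contained in $N_1\setminus\{(0,e_1)\}$; moreover $\vartheta_{\iota_j}(y)=e_j\lambda_1=\gamma_1^{(j)}$ and $\vartheta_{\iota_j}(\partial_y y)=0=\gamma_1^{(j)}-e_j\lambda_1$.

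\noindent
For the inductive step of $(A)$, I would first verify that $\textrm{Supp}(f_{i-1}^{k_i})\subseteq N_i$. Since this support lies in the $k_i$-fold Minkowski sum of $\textrm{Supp}(f_{i-1})\subseteq N_{i-1}$, the inclusion reduces to checking the two linear inequalities of (\ref{Ni}): summing the corresponding inequalities for $N_{i-1}$ over $k_i$ points gives the first one for $N_i$ at once, while the second one follows from the chain $\mu_{i-1}<\lambda_i\leq\mu_i$ read off from $(p2)$ and $(p3)$ of (\ref{PAR}). Both $f_i$ and $f_{i-1}^{k_i}$ being Weierstrass polynomials of degree $e_i=k_ie_{i-1}$ in $y$, their $(0,e_i)$-coefficients coincide, so $\delta_i:=f_i-f_{i-1}^{k_i}$ has support in $N_i\setminus\{(0,e_i)\}$ as required.

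\noindent
For the inductive step of $(B)$, I would decompose $f_{i-1}=f_{i-2}^{k_{i-1}}+\delta_{i-1}$ using $(A)$ at index $i-1$. By induction, $\vartheta_{\iota_j}(f_{i-2})=\gamma_{i-1}^{(j)}$ for every $j\geq i-1$, so (\ref{VALPROD}) gives $\vartheta_{\iota_j}(f_{i-2}^{k_{i-1}})=k_{i-1}\gamma_{i-1}^{(j)}$. The identity $\iota_{i-1}^{\ast}f_{i-1}=0$ then pins $\vartheta_{\iota_{i-1}}(\delta_{i-1})=k_{i-1}\gamma_{i-1}^{(i-1)}$ by cancellation of leading terms. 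For $j\geq i$, I would invoke Lemma \ref{INDUI}, in particular the decomposition in its statement $(B)$, to transfer this valuation datum from $\iota_{i-1}$ to $\iota_j$; combined with the identity $\gamma_i^{(j)}=k_{i-1}\gamma_{i-1}^{(j)}-e_j\lambda_{i-1}+e_{i-1}\lambda_i$ read off from (\ref{GEN}), this forces $\vartheta_{\iota_j}(\delta_{i-1})=\gamma_i^{(j)}$, strictly below $k_{i-1}\gamma_{i-1}^{(j)}$, so by (\ref{VALSUM}) no further cancellation occurs and $\vartheta_{\iota_j}(f_{i-1})=\gamma_i^{(j)}$. The derivative statement follows by differentiating the same decomposition: $\partial_y f_{i-1}=k_{i-1}f_{i-2}^{k_{i-1}-1}\partial_y f_{i-2}+\partial_y\delta_{i-1}$, with $(B)$ at index $i-1$ controlling the first term and Lemma \ref{INDUI} the second.

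\noindent
For uniqueness, suppose $g_i\in\mathbb{C}[x,y]$ satisfies $\textrm{Supp}(g_i)\subseteq N_i$ and $\iota_i^{\ast}g_i=0$. Since $(0,e_i)$ is the only support point of $N_i$ on the $\beta$-axis and no $(\alpha,e_i)$ with $\alpha\geq 1$ lies in $N_i$, one can write $g_i=ay^{e_i}+xh(x,y)$ with $a\in\mathbb{C}$ and $\textrm{deg}_y(h)<e_i$; Weierstrass division of $g_i$ by the Weierstrass polynomial $f_i$ in $\mathbb{C}\{x,y\}$ forces $g_i=0$ when $a=0$, and otherwise, after rescaling to $a=1$, the difference $g_i-f_i$ reduces to the previous case and vanishes. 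The main obstacle in this plan is the step $j\geq i$ of $(B)$: the support bound $\textrm{Supp}(\delta_{i-1})\subseteq N_{i-1}$ alone does not determine $\vartheta_{\iota_j}(\delta_{i-1})$, and extracting the precise value $\gamma_i^{(j)}$ requires the inter-parametrization decomposition of Lemma \ref{INDUI}, which plays the analogous role of Zariski's key step in the proof of Theorem 3.9 of \cite{ZAR}.
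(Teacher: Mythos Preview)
Your treatment of $(A)$ and of uniqueness is fine and in fact shorter than the paper's: you bypass the formal elimination of Step~2 entirely by using the existence of the Weierstrass polynomial $f_i$ with $\textrm{Supp}(f_i)\subseteq N_i$ (granted in Section~\ref{NPE}) and simply setting $\delta_i=f_i-f_{i-1}^{k_i}$. The Minkowski-sum check that $\textrm{Supp}(f_{i-1}^{k_i})\subseteq N_i$ is exactly what the paper does in Step~3, and your observation that both are monic of $y$-degree $e_i$ replaces the paper's more laborious tracking of $(0,e_i)$.

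Your plan for $(B)$, however, has a genuine gap. Lemma~\ref{INDUI} applied to $\delta_{i-1}$ (whose $y$-degree is $<e_{i-1}$) gives
\[
\vartheta_{\iota_j}(\delta_{i-1})=\tfrac{e_j}{e_{i-1}}\vartheta_{\iota_{i-1}}(\delta_{i-1})=\tfrac{e_j}{e_{i-1}}\,k_{i-1}\gamma_{i-1}^{(i-1)}=k_{i-1}\gamma_{i-1}^{(j)},
\]
which is the \emph{same} as $\vartheta_{\iota_j}(f_{i-2}^{k_{i-1}})$, not $\gamma_i^{(j)}$. Moreover (\ref{GEN}) reads $\gamma_i^{(j)}=k_{i-1}\gamma_{i-1}^{(j)}+e_j(\lambda_i-\lambda_{i-1})$, so $\gamma_i^{(j)}$ is strictly \emph{above} $k_{i-1}\gamma_{i-1}^{(j)}$, not below as you write. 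The two summands in $f_{i-1}=f_{i-2}^{k_{i-1}}+\delta_{i-1}$ therefore have equal $\iota_j$-valuation and cancel; determining the valuation of their sum is precisely the difficulty, and Lemma~\ref{INDUI} alone cannot resolve it.

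The paper's mechanism for this cancellation is different from what you sketch: one writes $\iota_j^{\ast}y=(\iota_{i-1}\circ\chi_{e_j/e_{i-1}})^{\ast}y+\epsilon_{i-1,j}$ with $\textrm{ord}(\epsilon_{i-1,j})=e_j\lambda_i$ and Taylor-expands $\iota_j^{\ast}f_{i-1}$ about $(\iota_{i-1}\circ\chi)^{\ast}f_{i-1}=0$. The zeroth-order term vanishes identically, and the first-order term $\epsilon_{i-1,j}\cdot(\iota_{i-1}\circ\chi)^{\ast}\partial_y f_{i-2}^{k_{i-1}}$ has order exactly $e_j\lambda_i+(k_{i-1}\gamma_{i-1}^{(j)}-e_j\lambda_{i-1})=\gamma_i^{(j)}$; Lemma~\ref{INDUI} enters only to bound the \emph{higher} derivative terms $\partial_y^{\ell}\delta_{i-1}$ and show they do not interfere. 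Without this Taylor step (or an equivalent device to exhibit the cancellation), your induction for $(B)$ does not close.
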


\noindent
The proof of Theorem \ref{MAINTHEO} is split into two subsections. We prove the existence of $f_1$ in Subsection \ref{MAINA} (step $1$ of the proof of Theorem \ref{MAINTHEO}) and that statement $(B)$ holds for $f_1$ in Subsection \ref{MAINB} (Lemma \ref{MAINTHEOI1}). For $i\in\{2,\ldots,s\}$ we apply induction. The proof of the existence of $f_i$ in the terms of statement $(A)$ is done in Subsection \ref{MAINA}, assuming that Theorem \ref{MAINTHEO} holds up to $i-1$. Then, assuming that statement $(A)$ holds up to $i$ and statement $(B)$ holds up to $i-1$, we prove statement $(B)$ for $f_i$ in Subsection \ref{MAINB}. We also prove in Subsections \ref{MAINA} and \ref{MAINB} some results needed for the main proof. Lemma \ref{INDUI} will be particularly useful for the construction of the Algorithm presented in Section \ref{CA}. 

\noindent
The following Corollary shows that we can extend $f_s$ obtained in Theorem \ref{MAINTHEO} to a representative of our plane curve. We can see $f_s$ as an approximation of $f$. Of course the approximation is better the higher the degree of $\iota_s^{\ast}y$ is.
We will not go into details in the proof of Corollary \ref{APPROX} as it repeats many of the reasoning's to be made in the proof of Theorem \ref{MAINTHEO}, statement \em (A), \em and we will present it at the end of Subsection \ref{MAINA}.
\noindent
\begin{corollary}\label{APPROX}
Let $f\in\mathbb C\{x,y\}$, $Y=\{f(x,y)=0\}$ be the germ of a plane curve, $\iota$ as in (\ref{PAR}) and $f_s(x,y)$ as in Theorem \ref{MAINTHEO}. Then there is $\delta\in\mathbb C\{x,y\}$ such that $\vartheta_{\iota}(f_s)=\vartheta_{\iota}(\delta)$ and $f(x,y)=f_s(x,y)+\delta(x,y)$..
\end{corollary}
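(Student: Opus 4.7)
The plan is to take $\delta := f - f_s$, which lies in $\mathbb{C}\{x,y\}$ because $f$ does and $f_s$ is a polynomial, and for which the decomposition $f = f_s + \delta$ holds by construction. With this choice the corollary reduces to the single valuation identity $\vartheta_\iota(f_s) = \vartheta_\iota(\delta)$.

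The essential input is that $\iota$ is the parametrization induced by a branch of $Y = \{f = 0\}$, so $\iota^{\ast} f = 0$ in $\mathbb{C}\{t\}$ and hence $\vartheta_\iota(f) = +\infty$. Writing $f_s = f + (-\delta)$ and applying (\ref{VALSUM}) gives
$\vartheta_\iota(f_s) \geq \min\{\vartheta_\iota(f), \vartheta_\iota(\delta)\} = \vartheta_\iota(\delta)$. Symmetrically, from $\delta = f + (-f_s)$ one obtains $\vartheta_\iota(\delta) \geq \vartheta_\iota(f_s)$, and combining the two inequalities forces equality. (In the degenerate case $f = f_s$, which arises when the terms distinguishing $\iota$ from $\iota_s$ are absent, both sides are $+\infty$ and the identity still holds trivially.)

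My guess as to why the authors describe the argument as \emph{repeating} reasoning from Theorem \ref{MAINTHEO}\,(A) is that one can alternatively build $\delta$ constructively, mimicking the inductive procedure that produces $\delta_i$ from $f_{i-1}^{k_i}$ one further step into the power-series realm: a Weierstrass-type elimination against $f_s$ peels off successive terms whose supports refine $N_s$, yielding $\delta$ explicitly as a series in $\mathbb{C}\{x,y\}$ whose matching of $\vartheta_\iota$-value with $f_s$ can then be read off term by term, in parallel with how statement (B) tracks $\vartheta_{\iota_j}(f_{i-1})$. The main obstacle along this longer path is proving convergence of the resulting series in $\mathbb{C}\{x,y\}$; the two-line valuation argument above avoids this entirely by asserting only the equality $\vartheta_\iota(f_s) = \vartheta_\iota(\delta)$ that the corollary actually demands.
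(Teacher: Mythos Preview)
Your argument is correct. Setting $\delta = f - f_s$ gives an element of $\mathbb C\{x,y\}$ with $f = f_s + \delta$, and since $\iota^{\ast}f = 0$ one has $\iota^{\ast}\delta = -\iota^{\ast}f_s$, so $\vartheta_\iota(\delta) = \vartheta_\iota(f_s)$ immediately. The valuation-inequality phrasing you chose is fine but even slightly roundabout: the single line $\iota^{\ast}\delta = -\iota^{\ast}f_s$ already gives the equality without appealing to (\ref{VALSUM}) twice.

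Your route is genuinely different from the paper's. The authors instead rerun the formal elimination process of Step~2 (starting now from $f_s$ rather than $f_{i-1}^{k_i}$) to produce $\widetilde\delta\in\mathbb C[[x,y]]$ with $\iota^{\ast}(f_s+\widetilde\delta)=0$, and then invoke the Step~3 mechanism (comparing two generators of $\mathcal I_Y$, one formal and one analytic, via a unit) to land back in $\mathbb C\{x,y\}$. What their approach buys is a \emph{constructive} description of $\delta$ as a (possibly infinite) combination of the monomials $x^{\alpha}f_0^{\beta_0}\cdots f_{s-1}^{\beta_{s-1}}$, in exact parallel with the $\widetilde\delta_{i,\ell}$'s; this is consonant with the algorithmic spirit of the paper and with the remark that ``the approximation is better the higher the degree of $\iota_s^{\ast}y$ is.'' Your approach, by contrast, proves the corollary as stated in one line and sidesteps the convergence issue you flagged, but gives no such structural description of $\delta$. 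For the bare statement of Corollary~\ref{APPROX}, your proof is the cleaner one.
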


\subsection{Proof of Statement $\mathbf{(A)}$ of Theorem \ref{MAINTHEO}}\label{MAINA}

\noindent
The following two results are needed for the proof of statement $(A)$ of Theorem \ref{MAINTHEO}.

\noindent
\begin{lemma}\label{LOWERMULT}
Let $m(x_0,\ldots,x_{i-1})=\sum_{\ell=0}^{i-1}e_{\ell} x_{\ell}$ and
\[
\mathcal R=\{(x_0,\ldots,x_{i-1})\in\mathbb R^{i}:(x_0,\ldots,x_{i-1})\geq (0,\ldots,0)\wedge \sum_{\ell=0}^{i-1}\gamma^{(i)}_{\ell+1} x_{\ell}\geq k_i\gamma^{(i)}_{i}\}.
\]	
Then $m(\mathcal R)$ admits minimum, equal to $e_{i}$, and this value is reached only at the point $(0,\ldots,0,k_i)$.
\end{lemma}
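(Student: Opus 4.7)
The plan is to treat this as a one-constraint linear program: minimize the linear functional $m$ on the polyhedron $\mathcal R \subset \mathbb R^i_{\geq 0}$ cut out by the single inequality $\sum_{\ell=0}^{i-1}\gamma_{\ell+1}^{(i)} x_\ell \geq k_i\gamma_i^{(i)}$. First I would verify that the candidate minimizer $(0,\ldots,0,k_i)$ lies in $\mathcal R$ (the inequality is tight there, since the left-hand side becomes $\gamma_i^{(i)}\cdot k_i$) and that $m$ takes the value $e_{i-1}k_i = e_i$ at this point. It then remains to show $m(x)\geq e_i$ for every $x\in\mathcal R$, with equality only at $(0,\ldots,0,k_i)$.

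The key step is a coefficient-by-coefficient comparison between the objective and the constraint. Concretely, I would prove that for every $\ell\in\{0,\ldots,i-1\}$,
\[
e_\ell\,\gamma_i^{(i)} \;\geq\; e_{i-1}\,\gamma_{\ell+1}^{(i)},
\]
with equality if and only if $\ell = i-1$. Granting this, for any $x\in\mathcal R$ the non-negativity of the $x_\ell$'s yields
\[
m(x) \;=\; \sum_{\ell=0}^{i-1} e_\ell\, x_\ell \;\geq\; \frac{e_{i-1}}{\gamma_i^{(i)}} \sum_{\ell=0}^{i-1}\gamma_{\ell+1}^{(i)}\, x_\ell \;\geq\; \frac{e_{i-1}}{\gamma_i^{(i)}}\cdot k_i\gamma_i^{(i)} \;=\; e_i.
\]
For equality, the second inequality forces the defining constraint of $\mathcal R$ to be tight, and the first forces $x_\ell = 0$ whenever the coefficient inequality is strict, i.e.\ for every $\ell<i-1$; the tight constraint then collapses to $\gamma_i^{(i)} x_{i-1} = k_i\gamma_i^{(i)}$, giving $x_{i-1} = k_i$ and hence uniqueness.

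The crux is therefore the coefficient inequality. For $\ell = i-1$ it is a trivial equality, so the only real work is the strict inequality for $\ell<i-1$ (which only arises when $i\geq 2$). Here I would invoke statement $(D)$ of Proposition \ref{EQINEQ} applied not to $Y$ but to the curve $Y_i$ introduced in Section \ref{NPE}: its semigroup $\Gamma(\lambda_1,\ldots,\lambda_i)$ has $i$ characteristic exponents, generators $\gamma_1^{(i)},\ldots,\gamma_i^{(i)}$, and the same integers $e_0,\ldots,e_i$, so statement $(D)$ with $s$ and $i$ there played by $i$ and $\ell+1$ delivers exactly $e_\ell\gamma_i^{(i)} > e_{i-1}\gamma_{\ell+1}^{(i)}$. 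The case $i=1$ is immediate, since then $\mathcal R$ degenerates to $\{x_0\geq k_1\}$ and $m(x_0)=x_0$. I expect the only subtle point to be carefully matching the indexing of Proposition \ref{EQINEQ} to the semigroup of $Y_i$ rather than $Y$; once that is in place, the argument is just a single weighted linear combination.
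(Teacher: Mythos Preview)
Your proof is correct and follows essentially the same approach as the paper: both arguments reduce to the coefficient inequality $e_\ell\gamma_i^{(i)} > e_{i-1}\gamma_{\ell+1}^{(i)}$ for $\ell<i-1$, obtained from statement $(D)$ of Proposition~\ref{EQINEQ} applied to the curve $Y_i$. The paper reaches the same conclusion via a change of variables $x = (0,\ldots,0,k_i) + h$ and then substituting the constraint to eliminate $h_{i-1}$, whereas your direct chaining of inequalities is a slightly cleaner packaging of the identical idea.
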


\begin{proof}
Note that $m(0,\ldots,0,k_i)=k_i e_{i-1}=e_i$. Let $(h_0,\ldots,h_{i-1})\in\mathbb R^{i}$ such that
\[
(h_0,\ldots,h_{i-1})+(0,\ldots,0,k_i)\in\mathcal R.
\]
Then $h_{\ell}\geq 0$, for all $\ell\in\{0,\ldots,i-2\}$, $h_{i-1}+k_i\geq 0$ and
\begin{equation}\label{INEQMIN1}
	(h_{i-1}+k_i)\gamma^{(i)}_{i}+\sum_{\ell=0}^{i-2}\gamma^{(i)}_{\ell+1} x_{\ell}\geq k_i\gamma^{(i)}_{i}.
\end{equation}
The inequality (\ref{INEQMIN1}) is equivalent to
\begin{equation}\label{INEQMIN2}
	h_{i-1}\geq-\sum_{\ell=0}^{i-2}\frac{\gamma^{(i)}_{\ell+1}}{\gamma^{(i)}_{i}} h_{\ell}.
\end{equation}
Hence
\begin{align*}
		m(h_1,\ldots,h_{i-2},h_{i-1}+k_i)=(h_{i-1}+k_i)e_{i-1}+\sum_{\ell=0}^{i-2}e_{\ell} h_{\ell}\geq & \\
		\geq e_i-e_{i-1} \sum_{\ell=0}^{i-2}\frac{\gamma^{(i)}_{\ell+1}}{\gamma^{(i)}_{i}} h_{\ell}+\sum_{\ell=0}^{i-2}e_{\ell} h_{\ell}=
		e_i+\sum_{\ell=0}^{i-2}\left(e_{\ell}-\frac{\gamma^{(i)}_{\ell+1}}{\gamma^{(i)}_{i}}e_{i-1}\right) h_{\ell}. &
\end{align*}
By statement $(D)$ of Proposition \ref{EQINEQ}, for all $\ell\in\{0,\ldots,i-2\}$,
\[
e_{\ell}-\frac{\gamma^{(i)}_{\ell+1}}{\gamma^{(i)}_{i}}e_{i-1}>0.
\]
Hence, if there is $\ell\in\{0,\ldots,i-2\}$ such that $h_{\ell}>0$, we conclude that
\[
m(h_1,\ldots,h_{i-2},h_{i-1}+k_i)>e_i.
\]
Assume that $h_1=\cdots=h_{i-2}=0$. Then inequality (\ref{INEQMIN2}) implies $h_{i-1}\geq 0$. If $h_{i-1}>0$, we have
\[
m(0,\ldots,0,h_{i-1}+k_i)=h_{i-1}e_{i-1}+e_i>e_i.
\]
We conclude that the result holds.
\end{proof}

\begin{proposition}\label{VALGROW}
	Let $i\in\{1,\ldots,s\}$ and $(w_j)_{j\in \mathbb N_0}$ be a sequence of elements of $\mathbb C[[x,y]]$ such that, for all $j\in\mathbb N_0$, $\vartheta_{\iota_i}(w_{j+1})>\vartheta_{\iota_i}(w_{j})$. Then
	\[
	\vartheta_{\iota_i}\left(\sum_{j\geq 0}w_{j}\right)=\vartheta_{\iota_i}(w_{0}).
	\]
\end{proposition}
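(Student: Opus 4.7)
The plan is to reduce this to the standard non-archimedean fact that the $t$-adic order of a convergent series in $\mathbb{C}[[t]]$ whose summands have strictly increasing orders equals the order of its first summand. The key observation is that $\vartheta_{\iota_i}$ factors as $\vartheta_{\iota_i}(w) = \textrm{ord}(\iota_i^{\ast} w)$, where $\iota_i^{\ast}: \mathbb{C}[[x,y]] \to \mathbb{C}[[t]]$ is a continuous ring homomorphism with respect to the $(x,y)$-adic and $(t)$-adic topologies. Thus infinite sums can be pushed inside $\iota_i^{\ast}$, and the whole argument may be carried out in $\mathbb{C}[[t]]$, where $\textrm{ord}$ is a genuine discrete valuation.

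First, I would note that the hypothesis says $\vartheta_{\iota_i}(w_j)$ is a strictly increasing sequence in $\mathbb N_0 \cup \{+\infty\}$, hence $\vartheta_{\iota_i}(w_j) \to +\infty$. Equivalently, the series $\sum_{j\geq 0} \iota_i^{\ast} w_j$ is Cauchy in the $(t)$-adic topology of $\mathbb{C}[[t]]$ and therefore converges; and, by continuity of $\iota_i^{\ast}$,
\[
\iota_i^{\ast}\!\left(\sum_{j\geq 0} w_j\right) \;=\; \sum_{j\geq 0} \iota_i^{\ast} w_j \quad \text{in } \mathbb{C}[[t]].
\]
Setting $N := \vartheta_{\iota_i}(w_0) = \textrm{ord}(\iota_i^{\ast} w_0)$, every $j\geq 1$ gives $\textrm{ord}(\iota_i^{\ast} w_j) > N$, so $\iota_i^{\ast} w_j$ contributes nothing to the coefficients of $t^k$ for $k \leq N$, while the coefficient of $t^N$ in $\iota_i^{\ast} w_0$ is non-zero by definition of $N$. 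Consequently, the coefficient of $t^N$ in $\sum \iota_i^{\ast} w_j$ is the (non-zero) coefficient of $t^N$ in $\iota_i^{\ast} w_0$, and all coefficients of $t^k$ for $k<N$ vanish. Applying the relation (\ref{VALSUM}) iteratively (or invoking the ultrametric equality that holds when one term has strictly minimal order) yields $\textrm{ord}\bigl(\sum \iota_i^{\ast} w_j\bigr) = N$, which is the claim.

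There is no serious obstacle: the whole content of the statement is the strict version of the ultrametric inequality (\ref{VALSUM}), valid because exactly one summand attains the minimal order, and the only technical point is the passage from a finite to an infinite sum. The strict-increase hypothesis takes care of this, since the partial sums form a Cauchy sequence in both $\mathbb{C}[[t]]$ (automatic) and in $\mathbb{C}[[x,y]]$ whenever $\sum w_j$ is considered as an element there (as is the case in the later applications in Subsections \ref{MAINA} and \ref{MAINB}).
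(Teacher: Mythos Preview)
Your argument is correct and follows essentially the same route as the paper's proof: both push the sum through $\iota_i^{\ast}$ into $\mathbb{C}[[t]]$ and then read off the $t$-adic order, the paper by factoring out $t^{\kappa_0}$ and checking the remaining series is a unit, you by inspecting the coefficient of $t^N$ directly. Your remark about the well-definedness of $\sum_{j\geq 0} w_j$ in $\mathbb{C}[[x,y]]$ is a point the paper leaves implicit, so if anything you are slightly more careful there.
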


\begin{proof}
	Set $\kappa_j=\vartheta_{\iota_i}(w_{j})$. By the definition of $\vartheta_{\iota_i}$, there is $u_j\in\mathbb C[[t]]$ such that $u_j(0)\neq 0$ and $\iota_{i}^{\ast}w_j=t^{\kappa_j}u_j(t)$. Then
	\[
	\iota_{i}^{\ast}\sum_{j\geq 0}w_{j}=\sum_{j\geq 0}t^{\kappa_j}u_j(t)=t^{\kappa_0}\left(u_0(t)+\sum_{j\geq 1}t^{\kappa_j-\kappa_0}u_j(t)\right).
	\] 
	Set 
	\[
	\theta(t)=u_0(t)+\sum_{j\geq 1}t^{\kappa_j-\kappa_0}u_j(t)
	\]
	Since $\kappa_{j+1}>\kappa_j$ then $\theta\in\mathbb C[[t]]$ and $\theta(0)=u_0(0)\neq 0$. Hence the result follows from the definition of $\vartheta_{\iota_i}$.
\end{proof}

\begin{proof}[Proof of Statement $(A)$ of Theorem \ref{MAINTHEO}]
	
\noindent 
\em Step 1: \em Assume $i=1$. 
	
\noindent 
We have
\begin{equation}\label{MINVALN1}
	\min_{(\alpha,\beta)\in N_1}\{\vartheta_{\iota_1}(x^{\alpha}y^{\beta})\}=\min\{k_1\alpha+(k_1\lambda_1)\beta:(\alpha,\beta)\in N_1\}=k_1(k_1\lambda_1)
\end{equation}
and
\[
\max_{(\alpha,\beta)\in N_1}\{\vartheta_{\iota_1}(x^{\alpha}y^{\beta})\}=k_1(k_1\mu_1).
\]
The line segment
\[
\mathcal L=\{(\alpha,\beta)\in\mathbb N_0^{2}:k_1\alpha+(k_1\lambda_1)\beta=k_1(k_1\lambda_1)\}
\]
is a common face to $N_1$ and $\widetilde{N}_1$. Since $k_1$ is the smallest positive integer such that $k_1\lambda_1\in\mathbb Z$ then the only integer points of $\mathcal L$ are $(0,k_1)$ and $(k_1\lambda_1,0)$. Hence there are $a_{\alpha,\beta}\in\mathbb C$, $(\alpha,\beta)\in N_1$, such that $a_{0,k_1},a_{k_1\lambda_1,0}\in\mathbb C^{\ast}$ and
\[
f_1(x,y)=a_{0,k_1}y^{k_1}+a_{k_1\lambda_1,0}x^{k_1\lambda_1}+\sum_{\ell=k_1(k_1\lambda_1)+1}^{k_1(k_1\mu_1)}\left(\sum_{\substack{k_1\alpha+(k_1\lambda_1)\beta=\ell\\(\alpha,\beta)\in N_1}}a_{\alpha,\beta}x^{\alpha}y^{\beta}\right).
\]
Equality $\iota_1^{\ast}f_1=0$ and (\ref{MINVALN1}) imply that 
\[
a_{0,k_1}c_1^{k_1}+a_{k_1\lambda_1,0}=0.
\]
Choose $a_{0,k_1}=1$ and $a_{k_1\lambda_1,0}=-c_1^{k_1}$. Hence there is $\widetilde{\delta}_1\in\mathbb C[x,y]$ such that $\textrm{Supp}(\widetilde{\delta}_1)\subset N_1\setminus \mathcal L$, $\vartheta_{\iota_1}(\widetilde{\delta}_1)>k_1(k_1\lambda_1)$ and
\[
f_1(x,y)=y^{k_1}-c_1^{k_1}x^{k_1\lambda_1}+\widetilde{\delta}_1(x,y).
\]
Set $\delta_1(x,y)=-c_1^{k_1}x^{k_1\lambda_1}+\widetilde{\delta}_1(x,y)$.
	
\vspace{0.5cm}
	
\noindent
Let $i\in\{2,\ldots,s-1\}$. Assume that Theorem \ref{MAINTHEO} holds for $i-1$.
	
\noindent
\em Step 2 (Formal elimination Process): \em We prove the existence of $\widetilde{\delta}_i\in\mathbb C[[x,y]]$ such that $f^{k_i}_{i-1}(x,y)+\widetilde{\delta}_i(x,y)\in \mathcal I_C \mathbb C[[x,y]]$.
	
\noindent
Set $\widetilde{f}_{i,0}(x,y)=f^{k_i}_{i-1}(x,y)$. By the induction hypothesis of statement $(B)$ of Theorem \ref{MAINTHEO},
\begin{equation}\label{VALF0}
	\vartheta_{\iota_i}(\widetilde{f}_{i,0}(x,y))=k_i\gamma^{(i)}_i.
\end{equation}
Furthermore,  since $\textrm{ord}(f_{i-1})=e_{i-1}$ and $C_0(Y_{i-1})=\{y=0\}$ then the order of $\widetilde{f}_{i,0}(x,y)$ is $e_{i}$ and $C_0(\{\widetilde{f}_{i,0}(x,y)=0\})=\{y=0\}$. We prove the existence of $\widetilde{\delta}_i$ using a formal elimination process that starts with $\widetilde{f}_{i,0}$.
	
\noindent
Let $j\in\mathbb N_0$. If $j\neq 0$, assume that $I=\{1,\ldots,j\}$ and that, for all $\ell\in I$, there are $\widetilde{f}_{i,\ell},\widetilde{\delta}_{i,\ell}\in\mathbb C[[x,y]]$ such that
\begin{equation}\label{ORD}
	\textrm{ord}(\widetilde{f}_{i,\ell})=e_i,\, C_0(\{\widetilde{f}_{i,\ell}(x,y)=0\})=\{y=0\},
\end{equation}
\begin{equation}\label{INCVAL}
	\vartheta_{\iota_i}(\widetilde{f}_{i,\ell-1})<\vartheta_{\iota_i}(\widetilde{f}_{i,\ell})\neq  +\infty,
\end{equation}
\begin{equation}\label{DELTACONS}
	\widetilde{f}_{i,\ell}(x,y)-\widetilde{f}_{i,\ell-1}(x,y)=\widetilde{\delta}_{i,\ell}(x,y) \textrm{ and } \vartheta_{\iota_i}(\widetilde{\delta}_{i,\ell})=\vartheta_{\iota_i}(\widetilde{f}_{i,\ell-1}).
\end{equation}
By (\ref{VALSEMIG}), $\vartheta_{\iota_i}(\widetilde{f}_{i,j})\in \Gamma(\lambda_1,\ldots,\lambda_i)$. Hence, by the induction hypothesis, there are non negative integers $\alpha^{(j+1)}$, $\beta^{(j+1)}_0$,$\ldots$, $\beta^{(j+1)}_{i-1}$ such that
\[
	\vartheta_{\iota_i}(\widetilde{f}_{i,j})=	\vartheta_{\iota_i}\left(x^{\alpha^{(j+1)}}f_0^{\beta^{(j+1)}_0}\cdots f_{i-1}^{\beta^{(j+1)}_{i-1}}\right)=e_i\alpha^{(j+1)}+\sum_{\ell=0}^{i-1}\gamma^{(i)}_{\ell+1}\beta^{(j+1)}_{\ell}.
\]
If $j=0$ we will choose $\alpha^{(1)}$, $\beta^{(1)}_0$,$\ldots$, $\beta^{(1)}_{i-1}$ such that $\beta^{(1)}_{i-1}=0$. We can make this choice because of equality (\ref{VALF0}) and statement $(C)$ of Proposition \ref{EQINEQ}. Set $I=\{1,\ldots,j,j+1\}$,
\[
\widetilde{\delta}_{i,j+1}(x,y)= x^{\alpha^{(1)}}f_0^{\beta^{(1)}_0}(x,y)\cdots f_{i-2}^{\beta^{(1)}_{i-2}}(x,y)
\]
and $\widetilde{f}_{i,j+1}(x,y)=\widetilde{f}_{i,j}(x,y)+a_{j+1}\widetilde{\delta}_{i,j+1}(x,y)$, where $a_{j+1}\in\mathbb C$. We now prove that $a_{j+1}\in\mathbb C^{\ast}$ and that $\widetilde{f}_{i,j+1}(x,y)$ is not the null function by showing that
\begin{equation}\label{SUPPDELTA1}
	\textrm{Supp}(\widetilde{f}_{i,j+1})\neq \emptyset.
\end{equation}

\noindent
Set $\kappa=\vartheta_{\iota_i}(\widetilde{f}_{i,j})=\vartheta_{\iota_i}(\widetilde{\delta}_{i,j+1})$. There are $u,\theta\in\mathbb C[t]$ such that $u(0)\neq 0$, $\theta(0)\neq 0$, $\iota^{\ast}_i\widetilde{f}_{i,j}=t^{\kappa}u(t)$ and $\iota^{\ast}_i\widetilde{\delta}_{i,j+1}=t^{\kappa}\theta(t)$. Then
\begin{align*}
	\iota^{\ast}_i\widetilde{f}_{i,j+1}&=t^{\kappa}(u(t)+a_{j+1}\theta(t))=\\
	&=t^{\kappa}(u(0)+a_{j+1}\theta(0))+t^{\kappa}(u(t)-u(0)+a_{j+1}(\theta(t)-\theta(0))).
\end{align*}
Note that $u(t)-u(0)+a_{j+1}(\theta(t)-\theta(0))\in (t)$. Since $u(0)$ and $\theta(0)$ are non null, the equation $u(0)+a_{j+1}\theta(0)=0$ has one solution with respect to $a_{j+1}$ and the solution is non null. Choose $a_{j+1}$ as that solution. Then
\[
\vartheta_{\iota_i}(\widetilde{f}_{i,j+1})\geq \kappa+1> \vartheta_{\iota_i}(\widetilde{f}_{i,j}).
\]
Since ${(0,e_i)}\in \textrm{Supp}(\widetilde{f}_{i,j})$, to prove (\ref{SUPPDELTA1}) it suffices to prove that ${(0,e_i)}\not\in \textrm{Supp}(\widetilde{\delta}_{i,j+1})$. If $\alpha^{(j+1)}\neq 0$ then $\textrm{Supp}(\widetilde{\delta}_{i,j+1})\subseteq \{(\alpha,\beta)\in\mathbb N_0^2:\alpha\neq 0\}$ and we conclude that ${(0,e_i)}\not\in \textrm{Supp}(\widetilde{\delta}_{i,j+1})$. Assume $\alpha^{(j+1)}=0$. Set
\begin{equation}\label{MULTF}
	m(\beta_0,\ldots,\beta_{i-1})=\sum_{\ell=0}^{i-1}e_{\ell}\beta_{\ell}
\end{equation}
and
\begin{equation}\label{REGIONCONST}
	\mathcal R=\{(\beta_0,\ldots,\beta_{i-1})\in\mathbb R^{i}:(\beta_0,\ldots,\beta_{i-1})\geq (0,\ldots,0)\wedge \sum_{\ell=0}^{i-1}\gamma^{(i)}_{\ell+1} \beta_{\ell}\geq k_i\gamma^{(i)}_{i}\}.
\end{equation}
Note that since $\textrm{ord}(f_{\ell})=e_{\ell}$, the order of $\widetilde{\delta}_{i,j+1}$ is $m(\beta_0^{(j+1)},\ldots,\beta_{i-1}^{(j+1)})$. If $j=0$, Lemma \ref{LOWERMULT} implies that
\[
m(\beta_0^{(1)},\ldots,\beta_{i-2}^{(1)},0)>e_i.
\]
If $j\neq 0$, inequality
\[
\sum_{\ell=0}^{i-1}\gamma^{(i)}_{\ell+1}\beta^{(j+1)}_{\ell}=\vartheta_{\iota_i}(f_{i,j})>k_i\gamma^{(i)}_i
\]
and Lemma \ref{LOWERMULT} imply that $m(\beta_0^{(j+1)},\ldots,\beta_{i-1}^{(j+1)})>e_i$. We conclude on both cases that ${(0,e_i)}\not\in \textrm{Supp}(\widetilde{\delta}_{i,1})$, otherwise we would have $m(\beta_0^{(j+1)},\ldots,\beta_{i-1}^{(j+1)})\leq e_i$.

\noindent
If $\iota_i^{\ast}\widetilde{f}_{i,j+1}=0$ set $\widetilde{f}_i(x,y)=\widetilde{f}_{i,j+1}(x,y)$. If $\vartheta_{\iota_i}(\widetilde{f}_{i,j+1})\neq +\infty$, we iterate the procedure. If this procedure does not end in a finite number of steps, then $I=\mathbb N$ and
\begin{equation}\label{FORMALFI}
	\widetilde{f}_i(x,y)=f^{k_i}_{i-1}(x,y)+\sum_{\ell\geq 1}a_{\ell}\widetilde{\delta}_{i,\ell}.
\end{equation}
All that is left to prove is that in this case $\iota_i^{\ast}\widetilde{f}_{i}=0$. Let $\ell\in \mathbb N$. By (\ref{INCVAL}), $(\vartheta_{\iota_i}(\widetilde{f}_{i,j}))_{j\in\mathbb N_0}$ is a strictly increasing sequence of positive integers. Hence we can choose $j$ large enough such that
$\vartheta_{\iota_i}(\widetilde{f}_{i,j})>\ell$. By (\ref{DELTACONS}) we can rewrite equality (\ref{FORMALFI}) as
\[
\widetilde{f}_i(x,y)=\widetilde{f}_{i,j}(x,y)+\sum_{\ell\geq j+1}a_{\ell}\widetilde{\delta}_{i,\ell}.
\]
Therefore
\[
\vartheta_{\iota_i}(\widetilde{f}_i)\geq \min\left\{\vartheta_{\iota_i}(\widetilde{f}_{i,j}),\vartheta_{\iota_i}\left(\sum_{\ell\geq j+1}a_{\ell}\widetilde{\delta}_{i,\ell}\right)\right\}.
\]
By (\ref{INCVAL}) and (\ref{DELTACONS}), $(\vartheta_{\iota_i}(\widetilde{\delta}_{i,j}))_{j\in\mathbb N_0}$ is also a strictly increasing sequence. Therefore, applying Proposition \ref{VALGROW}, we obtain
\[
\vartheta_{\iota_i}\left(\sum_{\ell\geq j+1}a_{\ell}\widetilde{\delta}_{i,\ell}\right)=\vartheta_{\iota_i}(\widetilde{\delta}_{i,j+1})=\vartheta_{\iota_i}(\widetilde{f}_{i,j}).
\]
Hence
\[
\vartheta_{\iota_i}(\widetilde{f}_i)\geq \vartheta_{\iota_i}(\widetilde{f}_{i,j})>\ell
\]
and we conclude that $\vartheta_{\iota_i}(\widetilde{f}_i)=+\infty$, that is, $\iota_i^{\ast}\widetilde{f}_{i}=0$. Set $\widetilde{\delta}_i(x,y)=\sum_{\ell\in I}a_{\ell}\widetilde{\delta}_{i,\ell}(x,y)$. As seen in the construction of the $\widetilde{\delta}_{i,\ell}$'s, for all $\ell\in I$, ${(0,e_i)}\not\in \textrm{Supp}(\widetilde{\delta}_{i,j+1})$. Therefore ${(0,e_i)}\not\in \textrm{Supp}(\widetilde{\delta}_i)$.
	
\vspace{0.5cm}
	
\noindent
\em Step 3: \em We prove the existence of $a\in\mathbb C^{\ast}$ and $\delta_i\in\mathbb C[x,y]$ such that $\textrm{Supp}(\delta_i)\subseteq N_i\setminus \{(0,e_i)\}$ and $f_i(x,y)=af^{k_i}_{i-1}(x,y)+\delta_i(x,y)$.
	
\noindent
Let  $\widetilde{f}_i(x,y)=f^{k_i}_{i-1}(x,y)+\widetilde{\delta}_i(x,y)$ be as in step 2 and $f_i$ as in Section \ref{NPE}. Since $\widetilde{f}_i\in I_C \mathbb C[[x,y]]=(f_i)\mathbb C[[x,y]]$ then there is $u\in\mathbb C[[x,y]]$ such that
\begin{equation}\label{FORMALREL}
	f_i(x,y)=u(x,y)\widetilde{f}_i(x,y).
\end{equation}
By construction $\textrm{ord}(\widetilde{f}_i)=e_i$. As stated in Section \ref{NPE}, $\textrm{ord}(f_i)=e_i$. Therefore $\textrm{ord}(u)=0$ and we conclude that $u$ is a unit of $\mathbb C[[x,y]]$. Hence there is $a\in\mathbb C^{\ast}$ and $\varepsilon\in (x,y)\mathbb C[[x,y]]$ such that $u(x,y)=a+\varepsilon(x,y)$. We rewrite equality (\ref{FORMALREL}) as
\[ f_i(x,y)=af^{k_i}_{i-1}(x,y)+[a\widetilde{\delta}_i(x,y)+\varepsilon(x,y)(f^{k_i}_{i-1}(x,y)+\widetilde{\delta}_i(x,y))].
\]
Set $\delta_i(x,y)=f_i(x,y)-af^{k_i}_{i-1}(x,y)$. Since $f_i,f^{k_i}_{i-1}\in\mathbb C[x,y]$, we conclude that $\delta_i\in\mathbb C[x,y]$. To prove that $\textrm{Supp}(\delta_i)\subseteq N_i\setminus\{(0,e_i)\}$, given that $\textrm{Supp}(f_i)\subseteq N_i$, it suffices to prove that  $\textrm{Supp}(f^{k_i}_{i-1})\subseteq N_i$ and $\{(0,e_i)\}\not\in \textrm{Supp}(\delta_i)$. The Multinomial formula implies
\[
\textrm{Supp}(f_{i-1}^{k_i})\subseteq \sum_{j=1}^{k_i}\textrm{Supp}(f_{i-1}).
\]
By the induction hypothesis, $\textrm{Supp}(f_{i-1})\subseteq N_{i-1}$, hence
\[
\sum_{j=1}^{k_i}\textrm{Supp}(f_{i-1})\subseteq \sum_{j=1}^{k_i}N_{i-1}.
\]
Let $(\alpha_j,\beta_j)\in N_{i-1}$, $j=1,\ldots,k_i$. By (\ref{Ni}), the inequality 
\begin{equation}\label{SMALLNi1}
	k_1\sum_{j=1}^{k_i}\alpha_j+(k_1\lambda_1)\sum_{j=1}^{k_i}\beta_j=\sum_{j=1}^{k_i}\left(k_1\alpha_j+(k_1\lambda_1)\beta_j\right)\geq k_i e_{i-1}(k_1\lambda_1)=e_i(k_1\lambda_1)
\end{equation}
and inequality
\begin{align}\label{SMALLNi2}
	e_{i}\sum_{j=1}^{k_i}\alpha_j+(e_{i}\mu_{i-1})\sum_{j=1}^{k_i}\beta_j&=k_i\sum_{j=1}^{k_i}\left(e_{i-1}\alpha_j+(e_{i-1}\mu_{i-1})\beta_j\right)\leq \\ 
	&\leq k^2_i e_{i-1}(e_{i-1}\mu_{i-1})=e_i(e_{i}\mu_{i-1}) \nonumber
\end{align}
hold. Set
\[
\widetilde{N}_i=\{(\alpha,\beta)\in\mathbb N_0^2:k_1\alpha+(k_1\lambda_1)\beta\geq  e_i(k_1\lambda_1) \wedge e_{i}\alpha+(e_{i}\mu_{i-1})\beta\leq e_i(e_{i}\mu_{i-1}) \}.
\]
Since $\mu_{i-1}< \mu_i$ then $\widetilde{N}_i\subset N_i$. From inequalities (\ref{SMALLNi1}) and (\ref{SMALLNi2}) we conclude that 
\[
\sum_{j=1}^{k_i}N_{i-1}\subseteq \widetilde{N}_i.
\]
Note that $\textrm{Supp}(f_i)\subseteq N_i$ and $\varepsilon\in (x,y)\mathbb C[[x,y]]$ imply that $\{(0,e_i)\}\not\in \textrm{Supp}(\varepsilon f^{k_i}_{i-1})$. Also, by construction, $\{(0,e_i)\}\not\in \textrm{Supp}(a\widetilde{\delta}_i)$. Therefore, taking into account the definition of $\delta_i$, to prove that $\{(0,e_i)\}\not\in \textrm{Supp}(\delta_i)$ it suffices to prove that $\{(0,e_i)\}\not\in \textrm{Supp}(\varepsilon\widetilde{\delta}_i)$. This fact results immediately that for all $\ell\in I$, as proved in the construction of $\widetilde{\delta}_{i,\ell}$, if $\textrm{Supp}(\widetilde{\delta}_{i,\ell})\not \subseteq \{(\alpha,\beta)\in\mathbb N_0^2:\alpha\neq 0\}$ then $\textrm{ord}(\widetilde{\delta}_{i,\ell})>e_i$.

\vspace{0.5cm}
\noindent
\em Step 4: \em The polynomial $f_i$ is unique up to  multiplication by a non null constant.
	
\noindent
Let $g_i\in\mathbb C[x,y]$ such that $\iota_i^{\ast}g_i=0$ and $\textrm{Supp}(g_i)\subseteq N_i$. Since $\mathcal I_{Y_i}\mathbb C\{x,y\}=(f_i)\mathbb C\{x,y\}=(g_i)\mathbb C\{x,y\}$, there is $a\in\mathbb C^{\ast}$ and $\varepsilon\in (x,y)\mathbb C\{x,y\}$ such that
\begin{equation}\label{UNICITY}
	g_i(x,y)=(a+\varepsilon(x,y))f_i(x,y).
\end{equation}
Assume $\varepsilon$ non null. Let $n=\textrm{ord}(\varepsilon)$. There are $\theta\in(x,y)^{e_i+1}\mathbb C\{x,y\}$, $\varphi\in(x,y)^{n+1}\mathbb C\{x,y\}$ and a homogeneous polynomial $\widetilde{\varepsilon}$ of order $n$, such that $f_i(x,y)=y^{e_i}+\theta(x,y)$ and $\varepsilon(x,y)=\widetilde{\varepsilon}(x,y)+\varphi(x,y)$. We rewrite equality (\ref{UNICITY}) as
\[
g_i(x,y)=af_i(x,y)+y^{e_i}\widetilde{\varepsilon}(x,y)+y^{e_i}\varphi(x,y)+\theta(x,y)\varphi(x,y).
\]
By definition $y^{e_i}\widetilde{\varepsilon}$ is a homogenous polynomial of order $e_i+n$, $\textrm{ord}(y^{e_i}\varphi)=e_i+n+1$ and $\textrm{ord}(\theta\varphi)=e_i+n+2$. Then 
\[
\textrm{Supp}(y^{e_i}\widetilde{\varepsilon})\subseteq \textrm{Supp}(y^{e_i}\widetilde{\varepsilon}+y^{e_i}\varphi+\theta\varphi)
\]
The set $\textrm{Supp}(y^{e_i}\widetilde{\varepsilon})$ is equal to
\[
\{(0,e_i)\}+\textrm{Supp}(\widetilde{\varepsilon})
\]
and thus
\[
N_i\cap \textrm{Supp}(y^{e_i}\widetilde{\varepsilon})=\emptyset.
\]
Since $\textrm{Supp}(f_i)\subset N_i$, we conclude that 
\[
\textrm{Supp}(y^{e_i}\widetilde{\varepsilon})\subseteq \textrm{Supp}(g_i),
\]
which contradicts the fact that $\textrm{Supp}(g_i)\subseteq N_i$. Therefore $\varepsilon$ must be the null series.
\end{proof}

\begin{proof}[Proof of Corollary \ref{APPROX}]
If $\iota^{\ast}f_s=0$ then $\delta(x,y)=0$ and the result is proved. Otherwise we repeat the formal elimination process of step $2$ to obtain $\widetilde{\delta}\in\mathbb C[[x,y]]$ such that $\iota^{\ast}(f_s+\widetilde{\delta})=0$. Applying a similar reasoning made in step $3$, we construct a $\delta\in\mathbb C\{x,y\}$ such that $f(x,y)=f_s(x,y)+\delta(x,y)$.
\end{proof}

\subsection{Proof of Statement $\mathbf{(B)}$ of Theorem \ref{MAINTHEO}}\label{MAINB}

\noindent
We shall denote the operator partial derivative with respect to the variable $y$ as $\partial_y$. 

\noindent
Proposition \ref{SEMIREL} show us that there is a natural injective map $\Gamma(\lambda_1,\ldots,\lambda_{i-1})$ to $\Gamma(\lambda_1,\ldots,\lambda_i)$ that maps $n$ into $\frac{e_i}{e_{i-1}}n$. One can ask if there is such a map at valuation level, in the sense that, given $g\in\mathbb C[[x,y]]$, $\vartheta_{\iota_{i}}(g)=\frac{e_i}{e_{i-1}}\vartheta_{\iota_{i-1}}(g)$. The answer is not so simple as it can be seen in Example \ref{EX1.2}, where $\vartheta_{\iota_1}(f_2)=19$,  $\vartheta_{\iota_2}(f_2)=+\infty$ and $\vartheta_{\iota_3}(f_2)=117$. We inspect some simple cases first. Let $g(x)\in\mathbb C[[x]]$. Then, for all $i\in\{1,\ldots,s\}$ and $j\in\{i,\ldots,s\}$,
\begin{equation}\label{VALX}
	\vartheta_{\iota_j}(g)=e_j\textrm{ord}(g)=\frac{e_j}{e_i}e_i\textrm{ord}(g)=\frac{e_j}{e_i}\vartheta_{\iota_i}(g).
\end{equation}
For all $i\in\{1,\ldots,s\}$, $j\in\{i,\ldots,s\}$, $d\in\mathbb N_0$ and $\ell\in\{0,\dots,d\}$,
\begin{align}\label{VALY}
\vartheta_{\iota_j}(\partial_y^{\ell} y^d)&=(d-\ell)\vartheta_{\iota_j}( y)=(d-\ell)\textrm{ord}(\iota_j^{\ast}y)=(d-\ell)e_j\lambda_1=\\
&=(d-\ell)\frac{e_j}{e_i}e_i\lambda_1=\frac{e_j}{e_i}\vartheta_{\iota_i}(\partial_y^{\ell} y^d).\nonumber
\end{align}

\noindent
Let $n\in\mathbb N$ and $U$ be an arbitrary small open disc of $\mathbb C$ centered in zero. For $t\in U$, define the map $\chi_n(t)=t^n$. Let $g\in\mathbb C[[x,y]]$ and $h\in\mathbb C[[t]]$. Note that $\textrm{ord}(\chi_n^{\ast}h)=n\,\textrm{ord}(h)$ and if, for some $i\in\{1,\ldots,s\}$, $\iota_i^{\ast}g=0$ then $(\iota_i\circ \chi_n)^{\ast}g=0$.

\begin{lemma}\label{INDUI1}
Let $\alpha,\beta\in\mathbb N_0$, $a,b\in\mathbb C[[x]]$ and 
\[
g(x,y)=\sum_{\ell=0}^{n}a_{\ell}(x)y^{\ell}\in\mathbb C[[x]][y]
\]
such that $0\leq n<e_1$ and $\textrm{deg}_y(g)=n$. The following statements hold:
\begin{enumerate}
	\item [$(A)$]
	If $\vartheta_{\iota_1}(ay^{\alpha})=\vartheta_{\iota_1}(ay^{\beta})$ then $\alpha-\beta$ is a multiple of $e_1$.
	\item [$(B)$]
    There is one and only one $\tau_0\in\{0,\ldots,\textrm{deg}_y(g)\}$ such that 
    \[
    \vartheta_{\iota_1}(g)=\vartheta_{\iota_1}(a_{\tau_0}(x)y^{\tau_0}).
    \]
    Furthermore, for all $j\in \{1,\ldots,s\}$,
	\begin{equation}\label{INDUI1A}
	\vartheta_{\iota_j}(g)=\frac{e_j}{e_1}\vartheta_{\iota_1}(g)
	\end{equation}
 	and, if $\tau_0\neq 0$ then
    \begin{equation}\label{INDUI1B} 
    \vartheta_{\iota_j}(\partial_y g)= \vartheta_{\iota_j}(g)-e_j\lambda_1,
	\end{equation}
	otherwise
	\begin{equation}\label{INDUI1C}
	\vartheta_{\iota_j}(\partial_y g)> \vartheta_{\iota_j}(g)-e_j\lambda_1.
	\end{equation}
	\item [$(C)$]
	For all $j\in \{1,\ldots,s\}$ and $\sigma\in\mathbb N$,
	\begin{equation}\label{HIGHORDDER}
	 \vartheta_{\iota_j}(\partial^{\sigma}_y g)\geq \vartheta_{\iota_j}(g)-\sigma e_j\lambda_1.
	\end{equation}
\end{enumerate}
\begin{proof}
We begin by proving statement $(A)$. Equality (\ref{VALPROD}) allows us to rewrite $\vartheta_{\iota_1}(ay^{\alpha})=\vartheta_{\iota_1}(ay^{\beta})$ as 
\begin{equation}\label{INDUI1EQ1}
\vartheta_{\iota_1}(a)+\alpha\vartheta_{\iota_1}(y)= \vartheta_{\iota_1}(b)+\beta\vartheta_{\iota_1}(y).
\end{equation}
We can assume $\alpha\geq \beta$. Replacing $\vartheta_{\iota_1}(a)$,  $\vartheta_{\iota_1}(b)$ and $\vartheta_{\iota_1}(y)$ by their respective values in (\ref{INDUI1EQ1}), we obtain
\[
(\alpha-\beta)e_1\lambda_1=e_1 \textrm{ord}(b)-e_1 \textrm{ord}(a).
\]
Hence $\vartheta_{\iota_1}(ay^{\alpha})=\vartheta_{\iota_1}(ay^{\beta})$ if and only if $(\alpha-\beta)\lambda_1=\textrm{ord}(b)-\textrm{ord}(a)$.
Let $q$ and $r\in\{0,\ldots,e_1-1\}$ the unique non negative integers such that
\[
\alpha-\beta=qe_1+r.
\]
Applying this equality in $(\alpha-\beta)\lambda_1=\textrm{ord}(b)-\textrm{ord}(a)$, we obtain
\begin{equation}\label{VALEQI1}
	r\lambda_1=\textrm{ord}(b)-\textrm{ord}(a)-q(e_1\lambda_1)\in\mathbb Z.
\end{equation}
Statement \em C3) \em (see Section \ref{SQO}) implies that (\ref{VALEQI1}) holds if and only if $r=0$.

\noindent
We now prove statement $(B)$. Assume that $n\neq 0$. Set
\begin{equation}
	\rho_{d}=\min\{\vartheta_{\iota_1}(a_{\ell}y^{\ell}):a_{\ell}\neq 0 \textrm{ and } \ell\in\{d,\ldots,n\}\},\, d=0,1.
\end{equation}
Since $n<e_1$, statement \em (A) \em implies that there is one and only one $\tau_d\in\{d,\ldots,n\}$, $d=0,1$, such that
\begin{equation}\label{ORDVALGDG1}
	\vartheta_{\iota_1}(a_{\tau_d}y^{\tau_d})=\rho_d. 
\end{equation}
Hence $\vartheta_{\iota_1}(g)=\rho_0$. If $\tau_0\neq 0$ then $\tau_1=\tau_0$, $\rho_1=\rho_0$ and
\begin{equation}\label{ORDVALGDG2}
\vartheta_{\iota_1}(\partial_y g)=\vartheta_{\iota_1}(a_{\tau_1}y^{\tau_1-1})=\vartheta_{\iota_1}(a_{\tau_1})+(\tau_1-1)e_1\lambda_1=\rho_1-e_1\lambda_1.
\end{equation}
If $\tau_0= 0$ then $\tau_1>\tau_0$, $\rho_1>\rho_0$ and
\begin{equation}\label{ORDVALGDG3}
\vartheta_{\iota_1}(\partial_y g)=\rho_1-e_1\lambda_1>\rho_0-e_1\lambda_1.
\end{equation}
Equalities (\ref{VALX}) and (\ref{VALY}) imply that, for all $j\in\{1,\ldots,s\}$ and $\ell\in\{0,\ldots,n\}$, 
\[
\vartheta_{\iota_j}(a_{\ell}y^{\ell})=\frac{e_j}{e_1}\vartheta_{\iota_1}(a_{\ell}y^{\ell}) \textrm{ and } \vartheta_{\iota_j}\left(\partial_y (a_{\ell}y^{\ell})\right)=\frac{e_j}{e_1}\vartheta_{\iota_1}\left(\partial_y (a_{\ell}y^{\ell})\right).
\]
Hence
\[
\vartheta_{\iota_j}(g)=\frac{e_j}{e_1}\rho_0 \textrm{ and } \vartheta_{\iota_j}(\partial_y g)=\frac{e_j}{e_1}(\rho_1-e_1\lambda_1)=\frac{e_j}{e_1}\rho_1-e_j\lambda_1.
\]
Thus (\ref{INDUI1A}) and (\ref{INDUI1B}) hold. Under the condition of $\tau_0=0$, from inequality (\ref{ORDVALGDG3}) we obtain that
\[
\vartheta_{\iota_j}(\partial_y g)=\frac{e_j}{e_1}(\rho_1-e_1\lambda_1)>\frac{e_j}{e_1}(\rho_0-e_1\lambda_1)=\frac{e_j}{e_1}\rho_0-e_j\lambda_1
\]
and (\ref{INDUI1C}) is proven.

\noindent
The proof of equality (\ref{INDUI1A}) still holds if $n=0$. Inequality (\ref{INDUI1C}) holds trivially if $n=0$.

\noindent
We prove statement $(C)$ by induction on $\sigma\in\{1,\ldots,n\}$. Note that statement $(C)$ for $\sigma=1$ is proved in statement $(B)$ and that, for $\sigma\geq n+1$, inequality (\ref{HIGHORDDER}) holds trivially. Assume that $\sigma\in\{2,\ldots,n\}$. Then $0\leq deg_y\left(\partial^{\sigma-1}_yg\right)<n<e_1$ and by statement $(B)$,
\[
\vartheta_{\iota_j}\left(\partial_y (\partial^{\sigma-1}_yg)\right)\geq  \vartheta_{\iota_j}(\partial^{\sigma-1}_yg)-e_j\lambda_1.
\]
By the induction hypothesis,
\[
\vartheta_{\iota_j}(\partial^{\sigma-1}_y g)\geq \vartheta_{\iota_j}(g)-(\sigma-1) e_j\lambda_1.
\]
Thus
\[
\vartheta_{\iota_j}\left(\partial_y (\partial^{\sigma-1}_yg)\right)\geq  \vartheta_{\iota_j}(g)-\sigma e_j\lambda_1.
\]
\end{proof}

\end{lemma}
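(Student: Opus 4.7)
The plan is to extract everything from a careful analysis of $\vartheta_{\iota_1}$ on monomials in $x$ and $y$, and then lift the conclusions to $\vartheta_{\iota_j}$ via the termwise scaling identities (\ref{VALX}) and (\ref{VALY}), which together say that for any monomial $a(x)y^\ell$ we have $\vartheta_{\iota_j}(a(x)y^\ell) = (e_j/e_1)\vartheta_{\iota_1}(a(x)y^\ell)$. Statement $(A)$ is the engine that drives everything else: I would unfold its hypothesis using $\vartheta_{\iota_1}(y) = e_1\lambda_1$ and $\vartheta_{\iota_1}(a) = e_1\,\textrm{ord}(a)$, so that it becomes the rational identity $(\alpha-\beta)\lambda_1 = \textrm{ord}(b) - \textrm{ord}(a) \in \mathbb{Z}$. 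Writing $\alpha - \beta = qe_1 + r$ with $0 \le r < e_1$ reduces the claim to showing $r\lambda_1 \in \mathbb{Z} \Rightarrow r = 0$, which is immediate from property \emph{C3)}: $k_1 = e_1$ is by definition the smallest positive integer making $k_1\lambda_1$ integral. I expect no real obstacle here.

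For statement $(B)$, the existence and uniqueness of $\tau_0$ is the main content and follows directly from $(A)$. Since all exponents $\ell \in \{0,\ldots,n\}$ satisfy $0 \le \ell < e_1$, no two distinct exponents can differ by a multiple of $e_1$, so the valuations $\vartheta_{\iota_1}(a_\ell(x)y^\ell)$ (taken over $\ell$ with $a_\ell \ne 0$) are pairwise distinct; one of them strictly dominates and coincides with $\vartheta_{\iota_1}(g)$. The scaling identity (\ref{INDUI1A}) then follows by applying the termwise relation to this dominant term. For the derivative, I write $\partial_y g = \sum_{\ell \ge 1}\ell a_\ell(x)y^{\ell-1}$ and note that $\deg_y(\partial_y g) < e_1$ still, so the same unique-minimum principle from $(A)$ applies: if $\tau_0 \ge 1$ the dominant term of $\partial_y g$ is $\tau_0 a_{\tau_0}(x)y^{\tau_0-1}$, whose valuation is exactly $\vartheta_{\iota_j}(g) - e_j\lambda_1$, giving (\ref{INDUI1B}); if $\tau_0 = 0$, that term is killed by differentiation and every surviving contribution comes from an $\ell \ge 1$ with $\vartheta_{\iota_1}(a_\ell y^\ell) > \vartheta_{\iota_1}(g)$, yielding the strict inequality (\ref{INDUI1C}).

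Statement $(C)$ is then a straightforward induction on $\sigma$. The base case $\sigma = 1$ is part of $(B)$. For $\sigma \ge 2$, one observes that $\partial_y^{\sigma-1} g$ still has $y$-degree strictly less than $e_1$, so $(B)$ applies to $\partial_y^{\sigma-1} g$ with one further differentiation and yields $\vartheta_{\iota_j}(\partial_y^\sigma g) \ge \vartheta_{\iota_j}(\partial_y^{\sigma-1} g) - e_j\lambda_1$; combining with the inductive bound $\vartheta_{\iota_j}(\partial_y^{\sigma-1} g) \ge \vartheta_{\iota_j}(g) - (\sigma-1)e_j\lambda_1$ gives the required inequality. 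The only case one must flag separately is $\sigma > n$, in which $\partial_y^\sigma g = 0$ and the inequality holds vacuously with the convention $\vartheta_{\iota_j}(0) = +\infty$. The conceptual weight of the lemma is entirely in $(A)$, where the arithmetic of $e_1 = k_1$ forces the monomials of $g$ to have mutually distinct $\vartheta_{\iota_1}$-valuations; once that is in hand, the rest is bookkeeping.
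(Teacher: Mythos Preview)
Your proposal is correct and follows essentially the same route as the paper's proof: statement $(A)$ is reduced via $(\alpha-\beta)\lambda_1\in\mathbb Z$ and property \emph{C3)} to $r=0$; statement $(B)$ uses $(A)$ to obtain pairwise distinct $\vartheta_{\iota_1}$-valuations of the monomials (hence a unique minimizer $\tau_0$), then transfers to $\vartheta_{\iota_j}$ via the termwise scaling (\ref{VALX})--(\ref{VALY}), with the $\tau_0=0$ versus $\tau_0\neq 0$ dichotomy handled exactly as in the paper; and $(C)$ is the same induction on $\sigma$, applying $(B)$ to $\partial_y^{\sigma-1}g$. The only cosmetic difference is that the paper introduces the auxiliary quantities $\rho_0,\rho_1,\tau_1$ to track the minimum over $\ell\ge 1$ separately, whereas you argue directly; the content is identical.
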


\begin{lemma}\label{MAINTHEOI1}
Assume that there is $\delta_1\in\mathbb C[x,y]$ such that $\textrm{Supp}(\delta_1)\subset N_1\setminus\{(0,e_1)\}$ and $f_1(x,y)=y^{e_1}+\delta_1(x,y)$. Then, for all $j\in \{2,\ldots,s\}$, $\vartheta_{\iota_j}(f_1)=\gamma^{(j)}_2$ and $\vartheta_{\iota_j}(\partial_yf_{1})=\gamma^{(j)}_2-e_j\lambda_2$.
\end{lemma}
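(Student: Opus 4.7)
\noindent
The plan is to exploit the Puiseux factorization of $f_1$ over $\mathbb{C}\{x^{1/e_1}\}[y]$. Since $f_1$ is a Weierstrass polynomial of degree $e_1$ in $y$ annihilating the branch of $Y_1$, and condition $(p2)$ forces $\textrm{Supp}(\varphi_1)\subset kM_1$, a direct check shows that the Puiseux expansion $\zeta_1(x)=c_1 x^{\lambda_1}+\varphi_1(x^{1/k})$ of $Y_1$ has all of its $x$-exponents in $(1/e_1)\mathbb{N}$, so $\zeta_1\in\mathbb{C}\{x^{1/e_1}\}$. Therefore
\[ f_1(x,y) = \prod_{\theta^{e_1}=1}\bigl(y - \zeta_1^{(\theta)}(x)\bigr), \]
where $\zeta_1^{(\theta)}$ is the Galois conjugate induced by $x^{1/e_1}\mapsto \theta x^{1/e_1}$; explicitly $\zeta_1^{(\theta)}(x)=c_1\theta^{e_1\lambda_1}x^{\lambda_1}+\varphi_1^{(\theta)}(x^{1/k})$, with $\zeta_1^{(1)}=\zeta_1$.

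\noindent
Next I compute $\iota_j^{\ast} f_1$ by substituting $x=t^{e_j}$ and $y=\zeta_j(t)$, and analyze each factor. For $\theta=1$, the $\zeta_1$-part of $\zeta_j(t)$ cancels against $\zeta_1(t^{e_j})$, leaving
\[ \zeta_j(t)-\zeta_1(t^{e_j}) = c_2 t^{e_j\lambda_2}+\varphi_2(t^{e_j/k})+\cdots+c_j t^{e_j\lambda_j}+\varphi_j(t^{e_j/k}), \]
which has order exactly $e_j\lambda_2$ by $(p2)$. For $\theta\neq 1$, the leading term is $c_1(1-\theta^{e_1\lambda_1})t^{e_j\lambda_1}$. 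By $C3)$, $k_1=e_1$ is the smallest positive integer with $e_1\lambda_1\in\mathbb{Z}$, hence $\gcd(e_1,e_1\lambda_1)=1$, so the map $\theta\mapsto\theta^{e_1\lambda_1}$ permutes the $e_1$-th roots of unity and $\theta^{e_1\lambda_1}\neq 1$; the leading coefficient is non zero and the factor has order $e_j\lambda_1$. Multiplying the orders of the $e_1$ factors yields
\[ \vartheta_{\iota_j}(f_1) = e_j\lambda_2 + (e_1-1)e_j\lambda_1 = (k_1-1)e_j\lambda_1 + e_j\lambda_2 = \gamma_2^{(j)}. \]

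\noindent
For the second identity I differentiate the factorization via the product rule,
\[ \partial_y f_1 = \sum_{\theta^{e_1}=1}\,\prod_{\theta'\neq \theta}\bigl(y-\zeta_1^{(\theta')}\bigr), \]
and reuse the per-factor orders obtained above. The summand indexed by $\theta=1$ drops the unique factor of order $e_j\lambda_2$ and keeps $e_1-1$ factors of order $e_j\lambda_1$, contributing order $(e_1-1)e_j\lambda_1$. Each other summand keeps the $\theta'=1$ factor and drops one of order $e_j\lambda_1$, contributing order $(e_1-2)e_j\lambda_1+e_j\lambda_2>(e_1-1)e_j\lambda_1$ since $\lambda_2>\lambda_1$. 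The minimum is therefore attained by a unique summand, so leading terms cannot cancel, and
\[ \vartheta_{\iota_j}(\partial_y f_1) = (e_1-1)e_j\lambda_1 = \gamma_2^{(j)} - e_j\lambda_2. \]

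\noindent
The delicate point is the first step: verifying that $\zeta_1$ really generates a degree-$e_1$ extension (so that the $e_1$ conjugates $\zeta_1^{(\theta)}$ are distinct and the factorization of $f_1$ above is correct), which rests on the support constraint of $(p2)$ rewritten as $\textrm{Supp}(\varphi_1(x^{1/k}))\subset (1/e_1)\mathbb{N}$ combined with $\gcd(e_1,e_1\lambda_1)=1$ (the latter also being what forces $\theta^{e_1\lambda_1}\neq 1$ for $\theta\neq 1$). Once the factorization is in place the rest of the proof is just order arithmetic in $\mathbb{C}\{t\}$, with the strict inequality $\lambda_1<\lambda_2$ providing the gap that prevents cancellation in the derivative.
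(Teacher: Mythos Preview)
Your proof is correct and takes a genuinely different route from the paper. The paper expands $\iota_j^\ast f_1$ via Taylor's formula around $(\iota_1\circ\chi_{e_j/e_1})^\ast y$, using that $(\iota_1\circ\chi_{e_j/e_1})^\ast f_1=0$ and then invoking Lemma~\ref{INDUI1} (specifically the inequalities governing $\vartheta_{\iota_j}(\partial_y^\ell\delta_1)$) to show that the first-order Taylor term $k_1\epsilon_{1,j}(\iota_1\circ\chi_{e_j/e_1})^\ast y^{k_1-1}$ dominates; the derivative statement is then read off from the same estimates. You instead factor $f_1$ over $\mathbb{C}\{x^{1/e_1}\}$ into its $e_1$ Puiseux conjugates and compute the order of each linear factor after substituting $\iota_j$, reducing the whole computation to the single arithmetic fact $\gcd(e_1,e_1\lambda_1)=1$ together with $\lambda_1<\lambda_2$.

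Your argument is shorter and more transparent for this base case, and it bypasses Lemma~\ref{INDUI1} entirely; it is essentially the classical contact-order computation for approximate roots. The paper's Taylor approach, on the other hand, is deliberately set up as the template for the inductive step (the proof of statement~$(B)$ for general $i$), where one no longer has such a clean factorization of $f_{i-1}$ relative to $\iota_i$ without further work, and where the control furnished by Lemma~\ref{INDUI} becomes the natural substitute. So your method buys elegance here at the cost of not previewing the machinery needed later, while the paper's method pays an upfront price in order to make the general step a straight repetition.
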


\begin{proof}
We remind the reader that $e_1=k_1$. Let $\delta_1$ be as in statement \em (A) \em of Theorem \ref{MAINTHEO}. We must have 
\begin{equation}\label{VALf1}
\vartheta_{\iota_1}(\delta_1)=\vartheta_{\iota_1}(y^{e_1}),
\end{equation}
otherwise $\vartheta_{\iota_1}(f_1)=\min\{\vartheta_{\iota_1}(\delta_1),\vartheta_{\iota_1}(y^{e_1})\}\neq +\infty$ which contradicts $\iota_1^{\ast}f_1=0$. Note that $\vartheta_{\iota_1}(y^{e_1})=e_1\gamma_1^{(1)}$. Write $\delta_1$ as an element of $\mathbb C[x][y]$ and let $n=\textrm{deg}_y(\delta_1)$. Then there is $a_{\ell}\in\mathbb C[x]$, $\ell\in\{0,,\ldots,n\}$, such that
\[
\delta_1(x,y)=\sum_{\ell=0}^{n}a_{\ell}(x)y^n.
\]
As $\textrm{Supp}(\delta_1)\subset N_1\setminus\{(0,e_1)\}$ then $0\leq n<e_1$. Thus by statement \em (B) \em of Lemma \ref{INDUI1}, set $\tau_0$ the only element of $\{0,\ldots,n\}$ such that $\vartheta_{\iota_1}(\delta_1)=\vartheta_{\iota_1}(a_{\tau_0}y^{\tau_0})$. Since $\vartheta_{\iota_1}(\delta_1)=\vartheta_{\iota_1}(y^{e_1})$, statement \em (A) \em of Lemma \ref{INDUI1} implies that $e_1-\tau_0$ is a multiple of $e_1$. Given that $\tau_0\in\{0,\ldots,e_1-1\}$ then we must have $\tau_0=0$.

\noindent
Let $j\in\{2,\ldots,s\}$. Set
\[
\epsilon_{1,j}=c_2t^{e_j\lambda_2}+\varphi_2(t^{e_j/k})+\cdots+c_jt^{e_j\lambda_j}+\varphi_j(t^{e_j/k}).
\]
Therefore
\[
\iota_{j}^{\ast}x=(\iota_1\circ \chi_{\frac{e_j}{e_1}})^{\ast}x \textrm{ and } \iota_{j}^{\ast}y=(\iota_1\circ \chi_{\frac{e_j}{e_1}})^{\ast}y+\epsilon_{1,j}.
\]
Note that $\textrm{ord}(\epsilon_{1,j})=e_j\lambda_2$. By Taylor's formula,
\begin{align}\label{TAYLORF1}
	\iota_j^{\ast}f_1&=(\iota_1\circ \chi_{\frac{e_j}{e_1}})^{\ast}f_1(x,y)+k_1\epsilon_{1,j} (\iota_1\circ \chi_{\frac{e_j}{e_1}})^{\ast}y^{k_1-1}+\\	&+\sum_{\ell=2}^{e_1}\frac{k_1!}{(k_1-\ell)!\ell!}\epsilon_{1,j}^{\ell}(\iota_1\circ \chi_{\frac{e_j}{e_1}})^{\ast}y^{k_1-\ell}+
	\sum_{\ell=1}^{n}\frac{1}{\ell!}\epsilon_{1,j}^{\ell}(\iota_1\circ \chi_{\frac{e_j}{e_1}})^{\ast}\partial^{\ell}_y \delta_1(x,y).\nonumber
\end{align}
Since $\iota_1^{\ast}f_1=0$ then 
\begin{equation}\label{EQVALF1}
	(\iota_1\circ \chi_{\frac{e_j}{e_1}})^{\ast}f_1=0.
\end{equation}
For all $\ell\in\{2,\ldots e_1\}$,
\begin{align}\label{INEQ1VALF1}
	\textrm{ord}\left(\epsilon_{1,j}^{\ell}(\iota_1\circ \chi_{\frac{e_j}{e_1}})^{\ast}y^{k_1-\ell}\right)-\textrm{ord}\left(\epsilon_{1,j} (\iota_1\circ \chi_{\frac{e_j}{e_1}})^{\ast}y^{k_1-1}\right)=\\
	\nonumber
	=\ell e_j\lambda_2+\frac{e_j}{e_1}(k_1-\ell)e_1\lambda_1-\left( e_j\lambda_2+\frac{e_j}{e_1}(k_1-1)e_1\lambda_1\right)=\\
	\nonumber
	=(\ell-1)e_j(\lambda_2-\lambda_1)>0.\hspace{4.75cm}
\end{align}
Inequality (\ref{INDUI1C}) implies that
\begin{equation}\label{INEQ3VALF1}
	\vartheta_{\iota_j}(\partial_y^{\ell} \delta_1)>\vartheta_{\iota_j}(\delta_1)-\ell e_j\lambda_1=(k_1-\ell)e_j\lambda_1.
\end{equation}
Therefore, for all $\ell\in\{1,\ldots n\}$,
\begin{align}\label{INEQ2VALF1}
	\textrm{ord}\left(\epsilon_{1,j}^{\ell}(\iota_1\circ \chi_{\frac{e_j}{e_1}})^{\ast}\partial^{\ell}_y \delta_1\right)-\textrm{ord}\left(\epsilon_{1,j} (\iota_1\circ \chi_{\frac{e_j}{e_1}})^{\ast}y^{k_1-1}\right)>\\
	\nonumber
	>\ell e_j\lambda_2+\frac{e_j}{e_1}(k_1-\ell)e_1\lambda_1-\left( e_j\lambda_2+\frac{e_j}{e_1}(k_1-1)e_1\lambda_1\right)=\\
	\nonumber
	=(\ell-1)e_j(\lambda_2-\lambda_1)\geq 0.\hspace{4.8cm}
\end{align}
Applying equality (\ref{EQVALF1}) and inequalities (\ref{INEQ1VALF1}), (\ref{INEQ2VALF1}), we conclude that
\[
\vartheta_{\iota_j}(f_1)=\textrm{ord}(\iota_j^{\ast}f_1)=\textrm{ord}\left(\epsilon_{1,j} (\iota_1\circ \chi_{\frac{e_j}{e_1}})^{\ast}y^{k_1-1}\right)=e_j\lambda_2+(k_1-1)\gamma_1^{(j)}=\gamma_2^{(j)}.
\]
From (\ref{INEQ3VALF1}) we obtain that 
\[
\vartheta_{\iota_j}(\partial_y\delta_1)>(k_1-1)e_j\lambda_1=\vartheta_{\iota_j}(y^{k_1-1}).
\]
Therefore,
\[
\vartheta_{\iota_j}(\partial_yf_1)=\vartheta_{\iota_j}(y^{k_1-1})=(k_1-1)\gamma_1^{(j)}=\gamma_2^{(j)}-e_j\lambda_2.
\]
\end{proof}

\noindent
To prove a more general version of Lemma \ref{INDUI1} we will need to apply the Weierstrass Division Algorithm, so we need to get (re)acquainted with the algorithm (see Theorem 6.2.6 of \cite{GP2} and its proof for the formal version). We are only interested in the case where we divide $g\in\mathbb C[[x]][y]$ by a Weierstrass polynomial $p$ with degree in $y$ strictly smaller then the degree of $g$ in $y$. Let $n=\textrm{deg}_y(g)$, $m=\textrm{deg}_y(p)$,
\[
g(x,y)=\sum_{\ell=0}^{n}a_{\ell}(x)y^{\ell} \textrm{ and } p(x,y)=y^{m}+\sum_{\ell=0}^{m-1}b_{\ell}(x)y^{\ell}.
\]
Set $w(x,y)=-\sum_{\ell=0}^{m-1}b_{\ell}(x)y^{\ell}$. Given $s(x,y)=\sum_{\ell\geq 0}c_{\ell}(x)y^{\ell}\in \mathbb C[[x]][[y]]$, define
\[
h(s)(x,y)=\frac{1}{y^m}\sum_{\ell\geq m}c_{\ell}(x)y^{\ell}=\sum_{\ell\geq 0}c_{\ell+m}(x)y^{\ell},\, H(s)(x,y)=h(sw)(x,y)
\]
and, for $\ell\in\mathbb N$, $H^{\ell}$ as $H$ composed with itself $\ell$ times. Set
\[
u(x,y)=h(g)(x,y)=\sum_{\ell= 0}^{n-m}a_{\ell+m}(x)y^{\ell}.
\]
The Weierstrass Division Algorithm tells us that there exist uniquely determined $q\in\mathbb C[[x,y]]$ and $r\in\mathbb C[[x]][y]$ such that $g(x,y)=q(x,y)p(x,y)+r(x,y)$ and $\textrm{deg}_y(r)<m$. Furthermore
\begin{equation}\label{QUOTIENT}
	q(x,y)=u(x,y)+\sum_{\ell\geq 1}H^{\ell}(u)(x,y).
\end{equation}

\begin{proposition}\label{WEIERQUOTIENT}
Let $p,g\in\mathbb C[[x]][y]$ such that $p$ is a Weierstrass polynomial, $\textrm{deg}_y(p)=m$ and $\textrm{deg}_y(g)=n$. Assume that $n>m$. Then there exist uniquely determined $q\in\mathbb C[[x]][y]$ and $r\in\mathbb C[[x]][y]$ such that $g(x,y)=q(x,y)p(x,y)+r(x,y)$, $\textrm{deg}_y(r)<m$ and $\textrm{deg}_y(q)<n-m$. Furthermore, if $g$ and $p$ both belong to $\mathbb C[x][y]$ then $q\in\mathbb C[x][y]$.
\end{proposition}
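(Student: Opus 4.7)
My plan is to upgrade the formal Weierstrass Division Algorithm (which a priori only gives $q \in \mathbb{C}[[x,y]]$) by using the explicit formula (\ref{QUOTIENT}): if I can show that each iterate $H^{\ell}(u)$ vanishes once $\ell$ is sufficiently large, then the infinite series in (\ref{QUOTIENT}) collapses to a finite sum and automatically lives in $\mathbb{C}[[x]][y]$ rather than in $\mathbb{C}[[x,y]]$. The uniqueness part and the polynomial-in-$x$ part will then follow by inspection.

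Writing $g = \sum_{\ell=0}^{n} a_\ell(x) y^\ell$ and $p = y^m + \sum_{\ell=0}^{m-1} b_\ell(x) y^\ell$, the series $w = -\sum_{\ell=0}^{m-1} b_\ell(x) y^\ell$ satisfies $\deg_y w \leq m-1$, and $u = h(g) = \sum_{\ell=0}^{n-m} a_{\ell+m}(x) y^\ell \in \mathbb{C}[[x]][y]$ has $\deg_y u \leq n-m$. The key observation is that $H$ strictly drops the $y$-degree on $\mathbb{C}[[x]][y]$: for $s \in \mathbb{C}[[x]][y]$ with $\deg_y s \leq d$, one has $sw \in \mathbb{C}[[x]][y]$ with $\deg_y(sw) \leq d+m-1$, and since $h$ discards the coefficients of $y^0, \ldots, y^{m-1}$ and shifts down by $m$, we get $\deg_y H(s) \leq d-1$. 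By induction $\deg_y H^{\ell}(u) \leq n-m-\ell$, so $H^{\ell}(u) = 0$ as soon as $\ell > n-m$. Consequently (\ref{QUOTIENT}) becomes the finite sum $q = u + \sum_{\ell=1}^{n-m} H^{\ell}(u) \in \mathbb{C}[[x]][y]$, with $\deg_y q \leq n-m$; comparing $y$-degrees in $g = qp+r$ with $p$ monic in $y$ and $\deg_y r < m$ then forces in fact $\deg_y q = n-m$.

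For uniqueness, if $q_1 p + r_1 = q_2 p + r_2$ with $\deg_y r_i < m$, subtracting gives $(q_1 - q_2) p = r_2 - r_1$; the right-hand side has $y$-degree strictly less than $m$, while the left-hand side (since $p$ is monic in $y$) has $y$-degree $\deg_y(q_1-q_2) + m \geq m$ unless $q_1 = q_2$, forcing $q_1 = q_2$ and hence $r_1 = r_2$. For the polynomial-in-$x$ statement, note that when $g, p \in \mathbb{C}[x][y]$ the coefficients $a_\ell, b_\ell$ lie in $\mathbb{C}[x]$, and multiplication by $w$ and the shift $h$ both preserve $\mathbb{C}[x][y]$; hence $u$ and each $H^{\ell}(u)$ lie in $\mathbb{C}[x][y]$, and so does $q$. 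The only non-routine step is the termination argument $H^{\ell}(u) = 0$ for $\ell > n-m$; everything else is direct degree bookkeeping.
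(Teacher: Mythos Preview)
Your argument is correct and follows essentially the same route as the paper's own proof: both invoke the explicit formula (\ref{QUOTIENT}), observe that $H$ lowers the $y$-degree by at least one so the sum terminates with $q\in\mathbb C[[x]][y]$, and note that $h$ and multiplication by $w$ preserve $\mathbb C[x][y]$ for the polynomial case. You additionally spell out the uniqueness (which the paper just inherits from the Weierstrass Division Algorithm) and correctly derive $\deg_y q = n-m$; the strict inequality $\deg_y q < n-m$ printed in the statement is evidently a slip, and your degree comparison gives the right bound.
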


\begin{proof}
We divide $g$ by $p$ applying the Weierstrass Division Algorithm. Then there exist uniquely determined $q\in\mathbb C[[x,y]]$ and $r\in\mathbb C[[x]][y]$ such that $g(x,y)=q(x,y)p(x,y)+r(x,y)$, $\textrm{deg}_y(r)<m$ and $q$ is given by (\ref{QUOTIENT}).
By construction, $u,w\in\mathbb C[[x]][y]$, $\textrm{deg}_y(u)\leq n-m$ and $\textrm{deg}_y(w)\leq m-1$. Then $h(uw)\in\mathbb C[[x]][y]$ and has degree in $y$ at most $n-m+m-1-m=n-m-1$. Iterating this reasoning, we conclude that there is $\tau\in\mathbb N_0$ such that
\[
q=u+\sum_{\ell = 1}^{\tau}H^{\ell}(u).
\]
Note that by definition:
\begin{enumerate}
	\item[$(i)$] 
	If $p\in\mathbb C[[x]][y]$ then $w\in\mathbb C[[x]][y]$ and if $p\in\mathbb C[x][y]$ then $w\in\mathbb C[x][y]$;
	\item[$(ii)$] 
	If $g\in\mathbb C[[x]][y]$ then $u\in\mathbb C[[x]][y]$ and if $g\in\mathbb C[x][y]$ then $u\in\mathbb C[x][y]$.
\end{enumerate}
Then if $p,g\in\mathbb C[[x]][y]$ then $h(uw)\in\mathbb C[[x]][y]$ and if $p,g\in\mathbb C[x][y]$ then $h(uw)\in\mathbb C[x][y]$. Thus, for all $\ell\in\{1,\ldots,\tau\}$, if $p,g\in\mathbb C[[x]][y]$ then $H^{\ell}(u)\in\mathbb C[[x]][y]$ and if $p,g\in\mathbb C[x][y]$ then $H^{\ell}(u)\in\mathbb C[x][y]$.
\end{proof}

\noindent
For the following Lemma we assume statement $(A)$ of Theorem \ref{MAINTHEO} holds up to $i\in\{2,\ldots,s\}$ and  statement $(B)$ of Theorem \ref{MAINTHEO} holds up to $i-1$. We remind the reader that $f_0(x,y)=y$. For $j\in\{1,\ldots,s\}$, set $\Gamma_0(\lambda_1,\ldots,\lambda_j)$ as the subsemigroup of $\Gamma(\lambda_1,\ldots,\lambda_j)$ generated by $e_j$. 

\begin{lemma}\label{INDUI}
For all $i\in \{1,\ldots,s-1\}$ the following statements hold:
\begin{enumerate}
\item[$(A)$]
Let $\alpha,\beta\in\mathbb N_0$. Let $a(x,y),b(x,y)\in\mathbb C[[x,y]]$ non null such that $\vartheta_{\iota_i}(a),\vartheta_{\iota_i}(b)\in \Gamma_{i-1}(\lambda_1,\ldots,\lambda_i)$. If $\vartheta_{\iota_i}(a f_{i-1}^{\alpha})=\vartheta_{\iota_i}(b f_{i-1}^{\beta})$ then $\alpha-\beta$ is a multiple of $k_i$.
\item[$(B)$]
Let $g\in\mathbb C[[x]][y]$ non null such that $e_{i-1}\leq n<e_i$. Then there are uniquely determined $d\in\mathbb N$ and $a_{\ell}\in\mathbb C[[x]][y]$, $\ell\in\{0,\ldots,d\}$, such that $d<k_1$, 
\begin{equation}\label{DECOMPI}
g(x,y)=\sum_{\ell=0}^{d}a_{\ell}(x,y)f_{i-1}^{\ell}(x,y),
\end{equation}
$\vartheta_{\iota_i}(g)\in \Gamma(\lambda_1,\ldots,\lambda_i)$ and, for $a_{\ell}\neq 0$, $\textrm{deg}_y(a_{\ell})<e_{i-1}$. Furthermore, for all $j\in \{i,\ldots,s\}$ and $\sigma\in\mathbb N$,
\[
\vartheta_{\iota_j}(g)=\frac{e_j}{e_i}\vartheta_{\iota_i}(g) \textrm{ and } \vartheta_{\iota_j}(\partial^{\sigma}_y g)\geq \vartheta_{\iota_j}(g)-\sigma e_j\lambda_i.
\]

\end{enumerate}
\end{lemma}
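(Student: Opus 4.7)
\noindent
The plan is to prove $(A)$ and $(B)$ in tandem by induction on $i$, with Lemma \ref{INDUI1} and Lemma \ref{MAINTHEOI1} supplying the base information at $i=1$. Throughout I will rely on the running induction hypothesis from Theorem \ref{MAINTHEO} $(B)$ that $\vartheta_{\iota_j}(f_{i-1}) = \gamma_i^{(j)}$ and $\vartheta_{\iota_j}(\partial_y f_{i-1}) = \gamma_i^{(j)} - e_j \lambda_i$ for $j \geq i$, together with Lemma \ref{ALTGENRETI}, Proposition \ref{SEMIREL}, Proposition \ref{EQINEQ}, property \emph{C3)}, and Proposition \ref{WEIERQUOTIENT}.

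\noindent
For statement $(A)$, I would rewrite the hypothesis as $(\alpha-\beta)\gamma_i^{(i)} = \vartheta_{\iota_i}(b) - \vartheta_{\iota_i}(a)$. The right-hand side belongs to the subgroup of $\mathbb{Z}$ generated by $\Gamma_{i-1}(\lambda_1,\ldots,\lambda_i)$, which by Lemma \ref{ALTGENRETI} $(B)$ applied to $Y_i$ equals $e_i M_{i-1}$. Writing $\alpha - \beta = qk_i + r$ with $0 \leq r < k_i$ and invoking $k_i \gamma_i^{(i)} \in e_i M_{i-1}$ (Proposition \ref{EQINEQ} $(C)$), the problem reduces to showing that $r \gamma_i^{(i)} \in e_i M_{i-1}$ forces $r = 0$. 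The alternative formula $\gamma_i^{(i)} = e_i \lambda_i + \sum_{\ell=1}^{i-1}(k_\ell - 1)\gamma_\ell^{(i)}$ from Lemma \ref{ALTGENRETI} $(A)$ converts this to $r\lambda_i \in M_{i-1}$, which by \emph{C3)} forces $r = 0$.

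\noindent
For statement $(B)$, the decomposition (\ref{DECOMPI}) is obtained by iterating Proposition \ref{WEIERQUOTIENT} to divide by the Weierstrass polynomial $f_{i-1}$ of $y$-degree $e_{i-1}$; each step strictly lowers the $y$-degree by $e_{i-1}$, so the process terminates after $d = \lfloor n/e_{i-1}\rfloor$ steps, and $n < e_i = k_i e_{i-1}$ yields $d < k_i$. Uniqueness follows from the uniqueness of Weierstrass quotient and remainder. The induction hypothesis of Lemma \ref{INDUI} $(B)$ at level $i-1$ (with Lemma \ref{INDUI1} covering $i=1$) applied to each nonzero $a_\ell$ (of $y$-degree $<e_{i-1}$) gives $\vartheta_{\iota_{i-1}}(a_\ell)\in\Gamma(\lambda_1,\ldots,\lambda_{i-1})$ together with the scaling $\vartheta_{\iota_j}(a_\ell) = (e_j/e_{i-1})\vartheta_{\iota_{i-1}}(a_\ell)$ for $j \geq i-1$; Proposition \ref{SEMIREL} then places $\vartheta_{\iota_i}(a_\ell)$ in $\Gamma_{i-1}(\lambda_1,\ldots,\lambda_i)$, so statement $(A)$ applies to distinct summands in (\ref{DECOMPI}). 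Since $d < k_i$, the valuations $\vartheta_{\iota_i}(a_\ell f_{i-1}^\ell)$ are pairwise distinct, so $\vartheta_{\iota_i}(g) = \min_\ell \vartheta_{\iota_i}(a_\ell f_{i-1}^\ell) \in \Gamma(\lambda_1,\ldots,\lambda_i)$. The identity $\vartheta_{\iota_j}(g) = (e_j/e_i)\vartheta_{\iota_i}(g)$ for $j \geq i$ follows from the same minimum-of-distinct-values argument, using the scalings $\vartheta_{\iota_j}(a_\ell) = (e_j/e_i)\vartheta_{\iota_i}(a_\ell)$ and $\gamma_i^{(j)} = (e_j/e_i)\gamma_i^{(i)}$ (Proposition \ref{SEMIREL}).

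\noindent
For the derivative bound I would first upgrade the running hypothesis to all orders by showing, via iteration of the level-$(i-1)$ bound applied to $\partial_y f_{i-1}$ (which has $y$-degree $e_{i-1}-1$), that $\vartheta_{\iota_j}(\partial_y^m f_{i-1}) \geq \gamma_i^{(j)} - m e_j \lambda_i$ for all $m \geq 0$. Expanding $\partial_y^\sigma(a_\ell f_{i-1}^\ell)$ via Leibniz and combining this with the level-$(i-1)$ bound $\vartheta_{\iota_j}(\partial_y^k a_\ell) \geq \vartheta_{\iota_j}(a_\ell) - k e_j \lambda_{i-1} \geq \vartheta_{\iota_j}(a_\ell) - k e_j \lambda_i$ (using $\lambda_{i-1} \leq \lambda_i$) then yields $\vartheta_{\iota_j}(\partial_y^\sigma(a_\ell f_{i-1}^\ell)) \geq \vartheta_{\iota_j}(a_\ell f_{i-1}^\ell) - \sigma e_j \lambda_i$; taking minima and invoking the already-proved identity for $\vartheta_{\iota_j}(g)$ closes the argument. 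The main obstacle I anticipate is the bookkeeping of the three intertwined inductions — on $i$ for Lemma \ref{INDUI} and Theorem \ref{MAINTHEO} $(B)$, and on $\sigma$ (with an inner induction on $m$ for the higher $y$-derivatives of $f_{i-1}$) — to ensure that each invocation of statement $(A)$ rests on $\Gamma_{i-1}(\lambda_1,\ldots,\lambda_i)$-membership genuinely established at a strictly earlier stage.
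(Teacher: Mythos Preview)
Your proof of $(A)$ and of the decomposition and scaling parts of $(B)$ matches the paper's argument essentially step for step: the same reduction to $r\lambda_i\in M_{i-1}$ via Lemma~\ref{ALTGENRETI} and \emph{C3)}, the same iterated Weierstrass division by $f_{i-1}$, the same use of $(A)$ to force pairwise distinct valuations among the summands, and the same scaling via Proposition~\ref{SEMIREL}.

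For the derivative bound the paper takes a slightly different route. Instead of a single Leibniz expansion, it inducts on $\sigma$ and splits cases according to whether $\deg_y(\partial_y^{\sigma-1}g)\geq e_{i-1}$ (then apply the already-established $\sigma=1$ case at level $i$) or $\deg_y(\partial_y^{\sigma-1}g)<e_{i-1}$ (then drop to a strictly lower level of the outer induction). Your direct Leibniz argument is cleaner and avoids this case split, at the price of first establishing $\vartheta_{\iota_j}(\partial_y^m f_{i-1})\geq \gamma_i^{(j)}-m e_j\lambda_i$ for all $m$, which you correctly obtain from Theorem~\ref{MAINTHEO}$(B)$ combined with the level-$(i-1)$ bound applied to $\partial_y f_{i-1}$ (of $y$-degree $e_{i-1}-1\geq e_{i-2}$). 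One bookkeeping point to tighten: when you invoke the induction hypothesis on $a_\ell$, its $y$-degree may lie below $e_{i-2}$, so you must descend to the appropriate level $i_\ell\leq i-1$ (as the paper does explicitly) rather than always to level $i-1$; this is exactly the issue you flag at the end, and it causes no real difficulty since $\Gamma_{i_\ell}(\lambda_1,\ldots,\lambda_i)\subseteq\Gamma_{i-1}(\lambda_1,\ldots,\lambda_i)$ and $\lambda_{i_\ell}\leq\lambda_{i-1}<\lambda_i$.
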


\begin{proof}
We will prove the Lemma by induction on $i$. This Lemma for $i=1$ has been proved in Lemma \ref{INDUI1}. 

\noindent
We begin by proving statement $(A)$. We can assume $\alpha\geq \beta$. Equality $\vartheta_{\iota_i}(a f_{i-1}^{\alpha})=\vartheta_{\iota_i}(b f_{i-1}^{\beta})$ is equivalent to
\begin{equation}\label{VELEQ1}
(\alpha-\beta)\gamma^{(i)}_i=\vartheta_{\iota_i}(b)-\vartheta_{\iota_i}(a).
\end{equation}
Let $q$ and $r\in\{0,\ldots,k_i-1\}$ be the unique non negative integers such that
\begin{equation}\label{VELEQ1B}
\alpha-\beta=qk_i+r.
\end{equation}
We apply (\ref{VELEQ1B}) and (\ref{GEN}) to (\ref{VELEQ1}) and obtain
\begin{equation}\label{VELEQ2}
	e_i(r\lambda_i)=\vartheta_{\iota_i}(b)-\vartheta_{\iota_i}(a)-qe_i(k_i\lambda_i)-(\alpha-\beta)(k_{i-1}\gamma^{(i)}_{i-1}-e_i\lambda_{i-1}).
\end{equation}
By hypothesis, $\vartheta_{\iota_i}(b)-\vartheta_{\iota_i}(a)\in 
e_i\mathbb Z+\sum_{j=1}^{i-1}\mathbb Z \gamma^{(i)}_j$. Statement \em C3) \em (see Section \ref{SQO}) and statement \em (B) \em of Lemma \ref{ALTGENRETI} tell us that $qe_i(k_i\lambda_i)$ and $e_i\lambda_{i-1}$ belong to $e_i\mathbb Z+\sum_{j=1}^{i-1}\mathbb Z \gamma^{(i)}_j$. From statement $(C)$ of Proposition \ref{EQINEQ} we know that $k_{i-1}\gamma^{(i)}_{i-1}\in e_i\mathbb N_0+\sum_{j=1}^{i-1}\mathbb N_0 \gamma^{(i)}_j$. We conclude from (\ref{VELEQ2}) that $e_i(r\lambda_i)\in e_i\mathbb Z+\sum_{j=1}^{i-1}\mathbb Z \gamma^{(i)}_j$, which is equivalent, again by statement \em (B) \em of Lemma \ref{ALTGENRETI}, to $r\lambda_i\in M_{i-1}$. Since $r\in\{0,\ldots,k_i-1\}$, statement \em C3) of Section \ref{SQO} \em implies $r=0$.

\noindent
We now prove statement \em (B)\em. Let $\textrm{deg}_y(g)=n$ and $d,p$ be the unique non negative integers such that $n=d\,e_{i-1}+p$ and $p<e_{i-1}$. Since $e_{i-1}\leq n<e_i$ then $1\leq d<k_i$. The fact that $f_{i-1}$ is a Weierstrass polynomial with $\textrm{deg}_y(f_{i-1})=e_{i-1}$ and coefficients in $\mathbb C[x]$ implies that $f_{i-1}^{d}$ is a Weierstrass polynomial with $\textrm{deg}_y(f^d_{i-1})=de_{i-1}$ and coefficients in $\mathbb C[x]$. By Proposition \ref{WEIERQUOTIENT}, there exist uniquely determined $q$ and $r$ of $\mathbb C[[x]][y]$ such that $g=qf_{i-1}^d+r$, $\textrm{deg}_{y}(r)<d\,e_{i-1}$ and $\textrm{deg}_{y}(q)<n-d\,e_{i-1}=p$. If $\textrm{deg}_y(r)<e_{i-1}$ the result is proven. Otherwise we iterate the procedure dividing $r$ by an adequate power of $f_{i-1}$. Since the degree of the remainder of the Weierstrass Division strictly decreases, the procedure ends after a finite number of steps. Therefore, there are $d$ and $a_{\ell}\in\mathbb C[[x]][y]$, $\ell\in\{0,\ldots,d\}$, uniquely determined such that (\ref{DECOMPI}) holds and $\textrm{deg}_y(a_{\ell})<e_{i-1}$. By the induction hypothesis, for $a_{\ell}\neq 0$, there is $i_{\ell}\leq i-1$ such that $\vartheta_{\iota_{i_{\ell}}}(a_{\ell})\in \Gamma(\lambda_1,\ldots,\lambda_{i_{\ell}})$. Furthermore,
\[
\vartheta_{\iota_i}(a_{\ell})\in\frac{e_i}{e_{i_{\ell}}} \Gamma(\lambda_1,\ldots,\lambda_{i_{\ell}}).
\]
By Proposition \ref{SEMIREL},
\[
\vartheta_{\iota_i}(a_{\ell})\in \Gamma_{i_{\ell}}(\lambda_1,\ldots,\lambda_i).
\]
Since $i_{\ell}\leq i-1$ then $\Gamma_{i_{\ell}}(\lambda_1,\ldots,\lambda_i)$ is a subsemigroup of $\Gamma_{i-1}(\lambda_1,\ldots,\lambda_i)$ and we conclude that $\vartheta_{\iota_i}(a_{\ell})\in\Gamma_{i-1}(\lambda_1,\ldots,\lambda_i)$.

\noindent
Set
\begin{equation}\label{VALG}
\rho=\min\{\vartheta_{\iota_i}(a_{\ell}f_{i-1}^{\ell}):a_{\ell}\neq 0 \textrm{ and } \ell\in\{0,\ldots,d\}\}.
\end{equation}
Statement \em (A) \em implies that there is one and only one $\tau\in\{0,\ldots,n\}$ such that
\begin{equation}\label{ORDVALG}
\vartheta_{\iota_i}(a_{\tau}f_{i-1}^{\tau})=\rho. 
\end{equation}
Therefore $\vartheta_{\iota_i}(g)=\rho$. Since $\vartheta_{\iota_i}(a_{\tau})$ belongs to $\Gamma_{i-1}(\lambda_1,\ldots,\lambda_i)$, which is a subsemigroup of $\Gamma(\lambda_1,\ldots,\lambda_i)$, then
\[
\vartheta_{\iota_i}(g)=\vartheta_{\iota_i}(a_{\tau})+\tau\gamma_i^{(i)}\in \Gamma(\lambda_1,\ldots,\lambda_i).
\]
For all $j\in\{i,\ldots,s\}$ and $\ell\in\{0,\ldots,d\}$, the induction hypothesis implies that
\[
\vartheta_{\iota_j}(a_{\ell}f_{i-1}^{\ell})=\vartheta_{\iota_j}(a_{\ell})+\ell\vartheta_{\iota_j}(f_{i-1})=\frac{e_j}{e_{i_{\ell}}}\vartheta_{\iota_{i_\ell}}(a_{\ell})+\ell\gamma_i^{(j)}.
\]
Applying Proposition \ref{SEMIREL}, we obtain that
\[
\gamma_i^{(j)}=\frac{e_j}{e_i}\gamma_i^{(i)}.
\]
We once again apply  the induction hypothesis to conclude that
\[
\frac{e_j}{e_{i_{\ell}}}\vartheta_{\iota_{i_\ell}}(a_{\ell})=\frac{e_j}{e_i}\frac{e_i}{e_{i_{\ell}}}\vartheta_{\iota_{i_\ell}}(a_{\ell})=\frac{e_j}{e_i}\vartheta_{\iota_i}(a_{\ell}).
\]
Thus 
\[
\vartheta_{\iota_j}(a_{\ell}f_{i-1}^{\ell})=\frac{e_j}{e_i}\vartheta_{\iota_i}(a_{\ell})+\frac{e_j}{e_i}\ell\gamma_i^{(i)}=\frac{e_j}{e_i}\vartheta_{\iota_i}(a_{\ell}f_{i-1}^{\ell}).
\]
We conclude that
\[
\vartheta_{\iota_j}(g)=\frac{e_j}{e_i}\rho=\frac{e_j}{e_i}\vartheta_{\iota_i}(g).
\]
Before we proceed, it will be useful to compute, for $\ell\in \mathbb N$ and $j\in\{i,\ldots,s\}$, $\vartheta_{\iota_j}(\partial_y f_{i-1}^{\ell})$. By statement $(B)$ of Theorem \ref{MAINTHEO},
\begin{align}\label{VALDGPI}
	\vartheta_{\iota_j}(\partial_y f_{i-1}^{\ell})&=\vartheta_{\iota_j}( f_{i-1}^{\ell-1})+\vartheta_{\iota_j}(\partial_y f_{i-1})=(\ell-1)\gamma_i^{(j)}+\gamma^{(j)}_i-e_j\lambda_i=\\
	&=\vartheta_{\iota_j}(f_{i-1}^{\ell})-e_j\lambda_i.\nonumber
\end{align}
Let $\sigma\in\{1,\ldots,n\}$. We now prove the following inequality
\begin{equation}\label{HIGHORDDERI}
\vartheta_{\iota_j}(\partial^{\sigma}_y g)\geq \vartheta_{\iota_j}(g)-\sigma e_j\lambda_i
\end{equation}
by induction also on $\sigma$ (we remind the reader that we are assuming that the Lemma holds up to $i-1$). Note that (\ref{HIGHORDDERI}) holds trivially for $\ell\geq n+1$. 

\noindent
We start by proving inequality (\ref{HIGHORDDERI}) with $\sigma=1$. For $\ell\in\{0,\ldots,d\}$, since $i_{\ell}\leq i-1$ and \em C1) \em holds then $\lambda_{i_{\ell}}\leq \lambda_{i-1}<\lambda_i$. By the induction hypothesis on $i$,
\begin{equation}\label{VALDG0I}
\vartheta_{\iota_j}(\partial_y a_0)\geq \vartheta_{\iota_j}(a_0)-e_j\lambda_{i_0}> \frac{e_j}{e_i}\vartheta_{\iota_i}(a_0)-e_j\lambda_{i}\geq \frac{e_j}{e_i}\rho-e_j\lambda_{i}.
\end{equation}
For $\ell\in\{1,\ldots,d\}$, from the inequality
\begin{eqnarray*}
\vartheta_{\iota_j}(f_{i-1}^{\ell}\partial_y a_{\ell})-\vartheta_{\iota_j}(a_{\ell}\partial_y f_{i-1}^{\ell})\geq\hspace{6cm}\\ 
\geq\left(\vartheta_{\iota_j}(f_{i-1}^{\ell})+\vartheta_{\iota_j}(a_{\ell})-e_j\lambda_{i_{\ell}}\right)-\left(\vartheta_{\iota_j}(a_{\ell})+\vartheta_{\iota_j}(f_{i-1}^{\ell})-e_j\lambda_i\right)=\hspace{1.1cm}\\
=e_j(\lambda_i-\lambda_{i_{\ell}})>0\hspace{8.2cm}
\end{eqnarray*}
we obtain that
\begin{equation}\label{VALDGI}
\vartheta_{\iota_j}\left(\partial_y (a_{\ell}f_{i-1}^{\ell})\right)=\vartheta_{\iota_j}(a_{\ell}\partial_y f_{i-1}^{\ell})=\frac{e_j}{e_i}\vartheta_{\iota_i}(a_{\ell}f_{i-1}^{\ell})-e_j\lambda_i\geq \frac{e_j}{e_i}\rho-e_j\lambda_{i}.
\end{equation}
We conclude from inequalities (\ref{VALDG0I}) and (\ref{VALDGI}) that
\[
\vartheta_{\iota_j}(\partial_y g)\geq \frac{e_j}{e_i}\rho-e_j\lambda_{i}=\vartheta_{\iota_j}(g)-e_j\lambda_{i}.
\]
Assume that $\sigma\in\{2,\ldots,n-e_{i-1}+1\}$. Then $\textrm{deg}_y(\partial^{\ell-1}g)\geq e_{i-1}$. By the induction hypothesis on $\sigma$,
\[
\vartheta_{\iota_j}\left(\partial_y (\partial^{\sigma-1}_y g)\right)\geq \vartheta_{\iota_j}(\partial^{\sigma-1}_y g)-e_j\lambda_i\geq \vartheta_{\iota_j}(g)-\sigma e_j\lambda_i.
\]
Assume now that $\sigma\in\{n-e_{i-1}+2,\ldots,n\}$. Set $\varsigma=n-e_{i-1}+1$. Then  $\textrm{deg}_y(\partial^{\varsigma}g)< e_{i-1}$. By the induction hypothesis on $i$,
\[
\vartheta_{\iota_j}\left(\partial^{\sigma-\varsigma}_y (\partial^{\varsigma}_y g)\right)\geq \vartheta_{\iota_j}(\partial^{\varsigma}_y g)-(\sigma-\varsigma)e_j\lambda_{i-1}> \vartheta_{\iota_j}(\partial^{\varsigma}_y g)-(\sigma-\varsigma)e_j\lambda_{i}.
\]
On the other hand, by the induction hypothesis on $\sigma$,
\[
\vartheta_{\iota_j}(\partial^{\varsigma}_yg)\geq \vartheta_{\iota_j}(g)-\varsigma e_j\lambda_i.
\]
Thus
\[
\vartheta_{\iota_j}\left(\partial^{\sigma-\varsigma}_y (\partial^{\varsigma}_y g)\right)\geq\vartheta_{\iota_j}(g)-\sigma e_j\lambda_i
\]
\end{proof}

\noindent
We would like to call the attention of the reader to the following facts:
\begin{itemize}
	\item[$R1)$]
	In statement \em (B) \em of Lemma \ref{INDUI}, if we consider $g\in\mathbb C[x][y]$ then the $a_{\ell}$'s in (\ref{DECOMPI}) are elements of $\mathbb C[x,y]$. In the proof of this statement, since $f_{i-1}$ is also an element of  $\mathbb C[x][y]$, Proposition \ref{WEIERQUOTIENT} assures us that $q\in\mathbb C[x][y]$ and as a consequence $r\in\mathbb C[x][y]$.
	\item[$R2)$]
	Once we have determined $f_1,\ldots,f_{i-1}$ the decomposition (\ref{DECOMPI}) allows us, by induction, to write $g$ uniquely as a linear combination of the type
	\[
	\sum_{\ell=1}^{\widetilde{d}}c_{\ell} x^{\alpha_\ell}y^{\beta_{0,\ell}}(x,y)f_1^{\beta_{1,\ell}}(x,y)\cdots f_{i-1}^{\beta_{i-1,\ell}}(x,y),
	\]
	where $\widetilde{d}\in\mathbb N$, $c_{\ell}\in\mathbb C^{\ast}$ and $(\alpha_{\ell},\beta_{0,\ell},\ldots,\beta_{i-1,\ell})\in\mathbb N_0^{i+1}\setminus\{(0,\ldots,0)\}$, for all $\ell\in\{1,\ldots,\widetilde{d}\}$.
\end{itemize}

\begin{proof}[Proof of statement $(B)$ of the Main Theorem]
Lemma \ref{MAINTHEOI1} proves statement $(B)$ of Theorem \ref{MAINTHEO} for $i=1$. Let $i\in\{2,\ldots,s-1\}$. Assume that statement $(A)$ of Theorem \ref{MAINTHEO} holds up to $i$ and  statement $(B)$ of Theorem \ref{MAINTHEO} holds up to $i-1$. Then Lemma \ref{INDUI} holds up to $i-1$. Some computations made in the proof of Lemmas \ref{INDUI1} and \ref{INDUI} are also useful for this proof, so we will make some references to them. 

\noindent
Let $\delta_i$ be as in statement \em (A) \em of Theorem \ref{MAINTHEO} and $n=\textrm{deg}_y(\delta_i)$. A similar reasoning to the one used to prove equality (\ref{VALf1}) allows us to conclude that
\[
\vartheta_{\iota_i}(\delta_i)=k_i\gamma_i^{(i)}=\vartheta_{\iota_i}(f_{i-1}^{k_i}).
\]
Since $\textrm{Supp}(\delta_i)\subset N_i\setminus\{(0,e_i)\}$ then $n<e_i$ and we can write 
\[
\delta_i(x,y)=\sum_{\ell=0}^{d}a_{\ell}(x,y)f_{i-1}^{\ell}(x,y)
\]
as in statement $(B)$ of Lemma \ref{INDUI}. Note that if $n<e_{i-1}$ then $d=0$. Let $\rho$ be as in (\ref{VALG}). Given that $\tau\in\{0,\ldots,d\}$ and $d<k_i$,  statement \em (A) \em of Lemma \ref{INDUI} implies that there is one and only one $\tau$ such that  (\ref{ORDVALG}) holds. Furthermore, $\vartheta_{\iota_i}(\delta_i)=\rho$ which implies
\[ 
\vartheta_{\iota_i}(a_{\tau}f_{i-1}^{\tau})=\vartheta_{\iota_i}(f^{k_i}).
\]
Hence, once again by statement \em (A) \em of Lemma \ref{INDUI}, we conclude that $\tau=0$. Let $j\in\{i+1,\ldots,s\}$ and set
\[
\epsilon_{i,j}=c_{i}t^{e_j\lambda_{i+1}}+\varphi_{i+1}(t^{e_j/k})+\cdots+c_jt^{e_j\lambda_j}+\varphi_j(t^{e_j/k}).
\]
Therefore
\[
\iota_{j}^{\ast}x=(\iota_i\circ \chi_{\frac{e_j}{e_i}})^{\ast}x \textrm{ and } \iota_{j}^{\ast}y=(\iota_i\circ \chi_{\frac{e_j}{e_i}})^{\ast}y+\epsilon_{i,j}.
\]
By Taylor's formula,
\begin{align}\label{TAYLORFI}
	\iota_j^{\ast}f_i=&(\iota_i\circ \chi_{\frac{e_j}{e_i}})^{\ast}f_i(x,y)+\epsilon_{i,j} (\iota_i\circ \chi_{\frac{e_j}{e_i}})^{\ast}\partial_y f_{i-1}^{k_i}+\\	&+\sum_{\ell=2}^{e_i}\frac{1}{\ell!}\epsilon_{i,j}^{\ell}(\iota_i\circ \chi_{\frac{e_j}{e_i}})^{\ast}\partial_y^{\ell} f_{i-1}^{k_i}+
	\sum_{\ell=1}^{n}\frac{1}{\ell!}\epsilon_{i,j}^{\ell}(\iota_i\circ \chi_{\frac{e_j}{e_1}})^{\ast}\partial^{\ell}_y \delta_i(x,y).\nonumber
\end{align}
By construction $\iota_i^{\ast}f_i=0$ which implies $(\iota_i\circ \chi_{\frac{e_j}{e_i}})^{\ast}f_i=0$. Let us prove that
\begin{equation}\label{ORDJFI}
\textrm{ord}(\iota_j^{\ast}f_i)=\textrm{ord}\left(\epsilon_{i,j} (\iota_i\circ \chi_{\frac{e_j}{e_i}})^{\ast}\partial_y f_{i-1}^{k_i}\right).
\end{equation}
To do so, it suffices to prove that, for all $\ell\in\{1,\ldots,n\}$,
\begin{equation}\label{INEQ1VALFI}
	\textrm{ord}\left(\epsilon_{i,j}^{\ell}(\iota_i\circ \chi_{\frac{e_j}{e_1}})^{\ast}\partial^{\ell}_y \delta_i\right)>\textrm{ord}\left(\epsilon_{i,j} (\iota_i\circ \chi_{\frac{e_j}{e_i}})^{\ast}\partial_y f_{i-1}^{k_i}\right)
\end{equation}
and that, for all $\ell\in\{2,\ldots,e_i\}$,
\begin{equation}\label{INEQ2VALFI}
\textrm{ord}\left(\epsilon_{i,j}^{\ell}(\iota_i\circ \chi_{\frac{e_j}{e_i}})^{\ast}\partial_y^{\ell} f_{i-1}^{k_i}\right)>\textrm{ord}\left(\epsilon_{i,j} (\iota_i\circ \chi_{\frac{e_j}{e_i}})^{\ast}\partial_y f_{i-1}^{k_i}\right)
\end{equation}
We start by proving (\ref{INEQ1VALFI}). Set $\overline{\delta}_i=\delta_i(x,y)-a_0(x,y)$. Then either $\overline{\delta}_i$ is null or verifies the conditions of statement $(B)$ of Lemma \ref{INDUI}. Furthermore $\textrm{deg}_y(a_0)<e_{i-1}$. Let $\ell\in\mathbb N$ and $i_0$ be the smallest non negative integer such that $\textrm{deg}_y(a_0)<e_{i_0}$. Then $i_0\leq i-1$ and, once again by statement $(B)$ of Lemma \ref{INDUI}, $\vartheta_{\iota_{\sigma}}\left(\partial_y^{\ell}a_0\right)\geq \vartheta_{\iota_{\sigma}}\left(a_0\right)-\ell e_{\sigma}\lambda_{i_0}$ for all $\sigma\in\{i,\ldots,s\}$. Since $\lambda_{i_0}\leq \lambda_{i-1}$ and $\tau=0$ then $\vartheta_{\iota_i}\left(a_0\right)-\ell e_i\lambda_{i_0}\geq k_i\gamma_i^{(i)}-\ell e_i\lambda_{i-1}$. Thus
\begin{equation}\label{DERA0J}
\vartheta_{\iota_i}\left(\partial_y^{\ell}a_0\right)\geq k_i\gamma_i^{(i)}-\ell e_i\lambda_{i-1},
\end{equation}
From (\ref{DERA0J}) and (\ref{VALDGPI}), we conclude that
\begin{align}\label{INEQVALDELTA1}
	\textrm{ord}\left(\epsilon_{i,j}^{\ell}(\iota_i\circ \chi_{\frac{e_j}{e_i}})^{\ast}\partial_y^{\ell} a_0\right)-\textrm{ord}\left(\epsilon_{i,j} (\iota_i\circ \chi_{\frac{e_j}{e_i}})^{\ast}\partial_y f_{i-1}^{k_i}\right)\geq \hspace{2.5cm}&\\
	\nonumber
	\geq \left(\ell e_j\lambda_{i+1}+\frac{e_j}{e_i}\left( k_i\gamma_i^{(i)}-\ell e_i\lambda_{i-1}\right)\right)- \left(e_j\lambda_{i+1}+\frac{e_j}{e_i}\left(k_i\gamma_i^{(i)}-e_i\lambda_i\right)\right)=&\\
	\nonumber
	=e_j\left((\ell-1)(\lambda_{i+1}-\lambda_{i-1})+\lambda_i-\lambda_{i-1}\right)>0.\hspace{4.5cm}&
\end{align}
If $\overline{\delta}_i$ is non null then, for all $\sigma\in\{i,\ldots,s\}$,
\begin{equation}\label{DEROVERDELTAJ}
\vartheta_{\iota_{\sigma}}\left(\partial_y^{\ell}\overline{\delta}_i\right)\geq \vartheta_{\iota_{\sigma}}\left(\overline{\delta}_i\right)-\ell e_{\sigma}\lambda_i.
\end{equation}
Furthermore, since $\tau=0$, $\vartheta_{\iota_i}\left(\overline{\delta}_i\right)>\vartheta_{\iota_i}\left(a_0\right)$. Therefore 
\begin{equation}\label{DEROVERDELTAI}
\vartheta_{\iota_i}\left(\partial_y^{\ell}\overline{\delta}_i\right)>  k_i\gamma_i^{(i)}-\ell e_i\lambda_i. 
\end{equation}
Hence
\begin{align}\label{INEQVALDELTA2}
	\textrm{ord}\left(\epsilon_{i,j}^{\ell}(\iota_i\circ \chi_{\frac{e_j}{e_i}})^{\ast}\partial_y^{\ell}\overline{\delta}_i \right)-\textrm{ord}\left(\epsilon_{i,j} (\iota_i\circ \chi_{\frac{e_j}{e_i}})^{\ast}\partial_y f_{i-1}^{k_i}\right)> \hspace{2.5cm}&\\
	\nonumber
	>\left(\ell e_j\lambda_{i+1}+\frac{e_j}{e_i}\left(k_i\gamma_i^{(i)}-\ell e_i\lambda_i\right)\right)- \left(e_j\lambda_{i+1}+\frac{e_j}{e_i}\left(k_i\gamma_i^{(i)}-e_i\lambda_i\right)\right)=\hspace{0.3cm}&\\
	\nonumber
	=e_j(\ell-1)(\lambda_{i+1}-\lambda_i)\geq 0.\hspace{7cm}&
\end{align}
Inequalities (\ref{INEQVALDELTA1}) and (\ref{INEQVALDELTA2}) imply that (\ref{INEQ1VALFI}) holds.

\noindent
Let $\ell\geq 2$. To prove (\ref{INEQ2VALFI}), note that $\textrm{deg}_y(\partial_y^{\ell} f_{i-1}^{k_i})<e_i$ and that $\partial_y^{\ell} f_{i-1}^{k_i}$ verifies the conditions of 
statement $(B)$ of Lemma \ref{INDUI}. Using similar arguments to the ones used in the proof of inequality (\ref{DEROVERDELTAI}), we prove that
\[
\vartheta_{\iota_i}\left(\partial_y^{\ell} f_{i-1}^{k_i}\right)\geq k_i\gamma_i^{(i)}-\ell e_i\lambda_i.
\]
and consequently
\begin{align*}
\textrm{ord}\left(\epsilon_{i,j}^{\ell}(\iota_i\circ \chi_{\frac{e_j}{e_i}})^{\ast}\partial_y^{\ell} f_{i-1}^{k_i}\right)-\textrm{ord}\left(\epsilon_{i,j} (\iota_i\circ \chi_{\frac{e_j}{e_i}})^{\ast}\partial_y f_{i-1}^{k_i}\right)\geq&\\
\geq e_j(\ell-1)(\lambda_{i+1}-\lambda_i)> 0.\hspace{5cm}&
\end{align*}
Equality (\ref{ORDJFI}) allow us to conclude that
\begin{align*}
	\vartheta_{\iota_j}(f_i)=&\textrm{ord}(\iota_j^{\ast}f_i)=\textrm{ord}\left(\epsilon_{i,j} (\iota_i\circ \chi_{\frac{e_j}{e_i}})^{\ast}\partial_y f_{i-1}^{k_i}\right)=\\
	=&e_j\lambda_{i+1}+\frac{e_j}{e_i}k_i\gamma_i^{(i)}-e_j\lambda_i=
	\frac{e_j}{e_i}\left(e_i\lambda_{i+1}+k_i\gamma_i^{(i)}-e_i\lambda_i\right)=\frac{e_j}{e_i}\gamma_{i+1}^{(i)}.
\end{align*}
By Proposition \ref{SEMIREL}, $\frac{e_j}{e_i}\gamma_{i+1}^{(i)}=\gamma_{i+1}^{(j)}$. Hence
$\vartheta_{\iota_j}(f_i)=\gamma_i^{(j)}$.

\noindent
We now prove that $\vartheta_{\iota_j}(\partial_y f_i)=\gamma_{i+1}^{(j)}-e_j\lambda_{j+1}$. Let $\ell\in\{1,\ldots,d\}$. If $\overline{\delta}_i$ is non null then, taking into account (\ref{DEROVERDELTAJ}) and statement $(B)$ of Lemma \ref{INDUI},
\begin{equation}\label{INEQ1VALFJ0}
\vartheta_{\iota_j}\left(\partial_y^{\ell}\overline{\delta}_i\right)>\frac{e_j}{e_i}k_i\gamma_i^{(i)}-e_j\lambda_i.
\end{equation}
Combining inequality (\ref{INEQ1VALFJ0}) with (\ref{VALDGI}) one obtains that 
\begin{equation}\label{INEQ1VALFJ}
\vartheta_{\iota_j}\left(\partial_y^{\ell}\overline{\delta}_i\right)-\vartheta_{\iota_j}(\partial_y f_{i-1}^{k_i})>0.
\end{equation}
From (\ref{DERA0J}) we conclude that
\begin{align}\label{INEQ2VALFJ}
\vartheta_{\iota_j}\left(\partial_y a_0\right)-\vartheta_{\iota_j}(\partial_y f_{i-1}^{k_i})\geq & \frac{e_j}{e_i}k_i\gamma_i^{(i)}-e_j\lambda_{i-1}-\left(\frac{e_j}{e_i}k_i\gamma_i^{(i)}-e_j\lambda_i\right)=\\
\nonumber
=& e_j(\lambda_i-\lambda_{i-1})>0.
\end{align}
Inequalities (\ref{INEQ1VALFJ}) and (\ref{INEQ2VALFJ}) imply that $\vartheta_{\iota_j}(\partial_y f_i)=\vartheta_{\iota_j}(\partial_y f_{i-1}^{k_i})$. Therefore
\[
\vartheta_{\iota_j}(\partial_y f_i)=\frac{e_j}{e_i}k_i\gamma_i^{(i)}-e_j\lambda_i=k_i\gamma_i^{(j)}-e_j\lambda_i.
\]
But, by (\ref{GEN}), $k_i\gamma_i^{(j)}-e_j\lambda_i=\gamma_{i+1}^{(j)}-e_j\lambda_{j+1}$ and the result is proved.
\end{proof}

\section{Computational Application}\label{CA}

\noindent
In this Section we present an algorithm, Algorithm \ref{COMPIMP}, based on the proof of Theorem \ref{MAINTHEO}, to compute $f_i$. We also present two examples where we compare computing times between an implementation of this algorithm and elimination theory using Gr\"{o}bner basis.

\noindent
As input for this algorithm we need the polynomials $f_0,\ldots,f_{i-1}$, the list $CE_{i-1}$ of the characteristic exponents $\lambda_1,\ldots,\lambda_{i-1}$, the list $SG_{i-1}$ of the generators of the semigroup associated to characteristic exponents $CE_{i-1}$, the parametrization $\iota_{i-1}$ given by $x_{i-1}$ and $y_{i-1}$, the characteristc exponent $\lambda_i$, the positive integer $k_i$ and the polynomial $\varphi_i$. We now explain the algorithm

\noindent
\em Lines 1 to 6: \em We update the lists $CE_{i-1}$ and $SG_{i-1}$. We compute $\iota_i$ and store it in $x_i$ and $y_i$.

\noindent
\em Lines 7 to 11: \em The list $LS_i$ stores the degree of the polynomials $\iota_i^{\ast}x, \iota_i^{\ast}f_0,\ldots,\iota_i^{\ast}f_{i-1}$. These degrees will play an important role in the elimination procedure. The lists $VE$ and $VC$ contain variables needed for future computations.

\noindent
\em Lines 12 to 14: \em We initiate step 2 of the proof of Theorem \ref{MAINTHEO}. For control, $j$ stores the number of iterations made so far and $n$ the valuation we are currently applying to the elimination procedure. For information on how the algorithm is running, we decided to display the current valuation being eliminated.

\noindent
\em Lines 15 to 16: \em We start a loop that will only end when we obtain $g$ such that $\vartheta_{\iota_i}(g)=+\infty$.  For line $15$ we assume that we have access to a procedure, denoted by \textbf{intregion}, that computes all $(i+1)$-uples with entries positive integers, on a compact region of a hyperplane and we store those uples on the list $E$. The equality 
\[
VE\cdot SG_i==n
\] 
defines the hyperplane accordingly to the current valuation we are eliminating. The support of $g$ must be contained in the set $N_i$. This fact is assured by the inequality 
\[
VE\cdot LS_i\leq SG_i(1)*\textrm{\bf{Deg}}(y_{i}(t))
\]
which comes from characterization (\ref{NiALT}) of $N_i$. Hence this elimination algorithm is finite. The main computational complexity of this algorithm arises from the integer linear programming problem of obtaining the list $E$.

\noindent
\em Lines 17 to 24: \em In this loop we compute the decomposition stated in remark $R2)$ from the elements of the list $E$. We also update the list $VC$ with the coefficients of the decomposition introduced in each cycle of the loop. In the first iteration of the \textbf{While} loop, $(0,\ldots,k_i)$ is an element of $E$ that we must not use.

\noindent
\em Lines 25 to 30: \em We solve the linear homogeneous equation, with variables the elements of $VC$, obtained by requiring that the coefficient of $t^n$ in $\iota_{i}^{\ast}g$ is zero. For computational reason we are assuming that solution of this linear homogeneous equation is given in rule form so we apply it to $g$.

\begin{algorithm}\label{COMPIMP}
	\SetKwInOut{Input}{input}\SetKwInOut{Output}{output}
	\SetKwFunction{Length}{Length}
	\SetKwFunction{Deg}{Deg}
	\SetKwFunction{Ord}{Ord}
	\SetKwFunction{IntRegion}{IntRegion}
	\SetKwFunction{Display}{Display}
	\SetKwFunction{LinSolve}{LinSolve}
	\SetKwFunction{Coef}{Coef}
	\SetKwFunction{ApplyRule}{ApplyRule}
	\Input{$k_i$, $\lambda_i$, $\varphi_i$, $x_{i-1}$, $y_{i-1}$, $SG_{i-1}$, $CE_{i-1}$, $f_0,\ldots,f_{i-1}$}
	\Output{$f_i$}
	
	\BlankLine
	List $CE_i=CE_{i-1}\cup\{\lambda_i\}$\;
	List $SG_i=k_i\cdot SG_{i-1}$\;
	Int $\gamma_i=k_i* SG_{i-1}(i)-SG_i(1)* CE_{i-1}(i-1)+SG_i(1)* \lambda_i$\;
	List $SG_i=SG_{i-1}\cup\{\gamma_i\}$\;
	Poly $x_i(t)=x_{i-1}(t^{k_i})$\;
	Poly $y_i(t)=y_{i-1}(t^{k_i})+c_it^{SG_i(1)*\lambda_i}+\varphi_{i}(t)$\;
	List $LS_i=\{SG_i(1),$\Deg{$y_{i}(t)$}$\}$\;
	\For{$\ell=1$ \KwTo $i-1$}{
		List $LS_i=LS_i\cup\{$\Deg{$f_{\ell}(x_i(t),y_i(t))$}$\}$
	}
	List $VE=\{\alpha,\beta_0,\ldots,\beta_{i-1}\}$; List $VC=\{\}$\;
	Poly $g(x,y)=f_{i-1}^{k_i}(x,y)$\;
	Int $j=1$; Int $n=$\Ord{$g(x_i(t),y_i(t))$}\; 
	\Display{n}\;
	\While{$n\neq +\infty$}{
		List E=\IntRegion{$VE\cdot SG_i==n\, \wedge\, VE\cdot LS_i\leq SG_i(1)*$\Deg{$y_{i}(t)$}$\,\wedge\, VE\geq \{0,\ldots,0\}$}\;
		\For{$\ell=1$ \KwTo \Length{$E$}}{
			\eIf{$j\neq 1$}
			{Poly $g(x,y)=g(x,y)+e_{j,\ell}x^{E(\ell)(1)}\prod_{m=2}^{i+1} f_{m-2}^{E(\ell)(m)}$}
			{\If{$E(\ell)\neq \{0,\ldots,0,k_i\}$}
				{Poly $g(x,y)=g(x,y)+e_{j,\ell}x^{E(\ell)(1)}\prod_{m=2}^{i+1} f_{m-2}^{E(\ell)(m)}$}
			}
			List $VC=VC\cup \{e_{j,\ell}\}$\;
		}
		Rule S=\LinSolve{\Coef{$g(x_i(t),y_i(t)),n$}==0,$VC$}\;
		Poly g(x,y)=\ApplyRule{S,$g(x,y)$}\;
		Int $n=$\Ord{$g(x_i(t),y_i(t))$}\; 
		Int $j=j+1$\;
		\Display{n}\;
	}
	\caption{Elimination Procedure}
	Poly $f_i(x,y)=g(x,y)$\;
\end{algorithm}

\noindent
For the following two examples, we implemented Algorithm \ref{COMPIMP} in \em Mathematica \em and compared the computational times with a elimination using Gr\"{o}bner basis, with a global order, in \em Singular\em.

\begin{example}\label{EX2.2}
Let $\zeta$ be as in Example \ref{EX2.1}. Assume that $f_1$ is known. In the \em Mathematica \em implementation, it took roughly 3 seconds to compute $f_2$. In \em Singular\em, after one hour, it still hadn't computed $f_2$ so the authors decided to terminate the computation. The authors decided to repeat the \em Singular \em session, now  giving values to the coefficients of $\zeta$, and obtained $f_2$ in roughly one second.
\end{example}

\begin{example}\label{EX3}
Let $c_{\ell}\in\mathbb C^{\ast}$, $i=1,2,3$ and $\zeta=c_1 x^{\frac{6}{5}}+c_2 x^{\frac{3}{2}}+c_3 x^{\frac{5}{3}}$. This branch has three characteristic exponents. Assume that $f_1$ and $f_2$ are known. In the \em Mathematica \em implementation, it took roughly 6 minutes and 33 seconds to compute $f_3$. In \em Singular\em, after one hour, it still hadn't computed $f_3$ (even after assigning values to the coefficients) so the authors decided to terminate the computation.
\end{example}

\end{document}